\newcommand{\G}{\mathcal{G}}
\newcommand{\K}{{H}}
\newcommand{\Gt}{\mathcal{G}_{\text{tight}}}
\newcommand{\mcH}{\mathbin{\mathcal{H}}}
\newcommand{\mcmu}{\mathbin{\mu}}
\newcommand{\ko}{{\K^{\scriptscriptstyle{(0)}}}}
\newcommand{\goo}{{G^{\scriptscriptstyle{(0)}}}}
\newcommand{\id}{{\rm{id}}}
\newcommand{\iso}{\operatorname{Iso}}
\newcommand{\fE}{\widehat{E}_0}
\newcommand{\tfE}{\widehat{E}_{\text{tight}}}
\newcommand{\up}{\uparrow\!}
\newcommand{\down}{\downarrow\!}
\DeclareMathOperator{\Ker}{Ker}
\newtheorem{prop}{Proposition}[section]
\newtheorem{thm}[prop]{Theorem}
\newtheorem{cor}[prop]{Corollary}
\newtheorem{lem}[prop]{Lemma}
\theoremstyle{definition}
\newtheorem{exmp}[prop]{Example}
\newtheorem{rem}[prop]{Remark}
\newlist{thmenum}{enumerate}{10}
\setlist[thmenum,1]{label=\textnormal{(\alph*)}}
\setlist[thmenum,2]{label=\textnormal{(\roman*)}}
\begin{document}

\title[Amenability and Uniqueness]{Amenability and Uniqueness for Groupoids Associated with Inverse Semigroups}
\author{Scott M. LaLonde and David Milan}
\address{Department of Mathematics\\
The University of Texas at Tyler\\
3900 University Boulevard\\
Tyler, TX 75799}
\email{slalonde@uttyler.edu, dmilan@uttyler.edu}

\date{\today}
\subjclass[2010]{46L05, 20M18}

\begin{abstract}
	We investigate recent uniqueness theorems for reduced $C^*$-algebras of Hausdorff \'{e}tale groupoids in the context of inverse semigroups. In many 
	cases the distinguished subalgebra is closely related to the structure of the inverse semigroup. In order to apply our results to full $C^*$-algebras, we also 
	investigate amenability. More specifically, we obtain conditions that guarantee amenability of the universal groupoid for certain classes of inverse semigroups. 
	These conditions also imply the existence of a conditional expectation onto a canonical subalgebra.
\end{abstract}

\maketitle


\section{Introduction} 
\label{sec:intro}

Paterson \cite{PatersonBook} defined the universal groupoid $\G(S)$ of an inverse semigroup $S$ and showed that the full and reduced $C^*$-algebras of 
the universal groupoid are isomorphic to the full and reduced $C^*$-algebras of the inverse semigroup, respectively. Exel \cite{ExelBig} gave a streamlined 
account of this construction and the more general construction of the groupoid of germs of an inverse semigroup action. Crucially, he defined an action of 
$S$ on the closure of the space of ultrafilters of idempotents, called the tight spectrum of $S$, with corresponding groupoid of germs called the tight groupoid 
$\Gt(S)$. Often an analog of the Cuntz-Krieger relations holds for the $C^*$-algebra of the tight groupoid, as has been shown for many classes of inverse 
semigroups: directed graphs \cite{PatersonGraph}, semigroupoids \cite{ExelBig}, Kellendonk's tiling semigroups \cite{ExelGonStar}, and Spielberg's path 
categories \cite{DonsigMilan}, to name some. 

In \cite{BNRSW} a very general version of the Cuntz-Krieger Uniqueness Theorem is proved for the reduced $C^*$-algebra of a Hausdorff \'{e}tale groupoid 
$\G$. It is shown that the reduced $C^*$-algebra of the interior of the isotropy of $\G$ embeds as a subalgebra $M_r$ of $C_{r}^*(\G)$, and that the pure 
states of $M_r$ with unique extension to $C_{r}^*(\G)$ are dense in the set of pure states on $M_r$. As a consequence, a $*$-homomorphism with domain 
$C_{r}^*(\G)$ is injective if and only if its restriction to $M_r$ is injective.

In this paper we are interested in understanding the uniqueness theorem of \cite{BNRSW} in the context of the groupoid of germs of an inverse semigroup 
action. We show that the universal groupoid of the centralizer of the idempotent semilattice $E(S)$ is contained in the interior of the isotropy bundle of the 
universal groupoid. In the case that $S$ is cryptic (that is, the $\mcH$-relation on $S$ is a congruence), the two groupoids are equal. 

More generally, the kernel of any action $\alpha$ of an inverse semigroup on a locally compact Hausdorff space generates a subgroupoid of the groupoid of 
germs $\G_{\alpha}$ of the action. We show that this subgroupoid is contained in the interior of the isotropy bundle of $\G_{\alpha}$ and give conditions under 
which equality holds. In the case of the standard action of $S$ on its tight spectrum, we have equality provided the semilattice of idempotents of $S$ is 
$0$-disjunctive. Thus there is a uniqueness theorem for the tight $C^*$-algebras of many inverse semigroups, where the distinguished subalgebra is generated 
by the centralizer of $E(S)$. This is a fairly satisfactory result that connects the algebraic structure of the inverse semigroup to the structure of its $C^*$-algebra 
for a large class of inverse semigroups. 

It is well known that every inverse semigroup $S$ can be described as an extension of the centralizer of $E(S)$ by a fundamental inverse semigroup. In light of
the results mentioned above, there should be a useful connection between the structure of the universal groupoid of an inverse semigroup and this extension. With 
that in mind, we conclude the paper with an investigation of the amenability of the universal groupoid. In particular, we obtain conditions under which the 
aforementioned extension gives us a ``two out of three'' theorem regarding amenability. Along the way, we will see that these conditions guarantee the existence
of a conditional expectation in many cases.
 
The structure of the paper is as follows. In Section \ref{sec:prelim}, we present the necessary background material on inverse semigroups and groupoids. In 
preparation for our work with the groupoid of the centralizer of $E(S)$, we collect some results on Clifford semigroups and group bundles in Section 
\ref{sec:clifford}. We present our first uniqueness theorem in Section \ref{sec:unique}. Section \ref{sec:actions} deals with groupoids of inverse semigroup actions; 
we present generalized uniqueness results there. Finally, our results on amenability come in Section \ref{sec:amenability}.


\section{Preliminaries}
\label{sec:prelim}

An \textit{inverse semigroup} is a semigroup $S$ such that for each $s$ in $S$, there exists a unique $s^*$ in $S$ such that 
\[
	s = s s^* s\quad \text{and}\quad s^* = s^* s s^*.
\]
We quickly outline a number of important facts about inverse semigroups. A thorough treatment of the subject can be found in~\cite{LawsonBook}.

There is a natural partial order on $S$ defined by $s \leq t$ if $s = te$ for some idempotent $e$. Given $s$ in $S$ we let $\down s = \{ t \in S : t \leq s\}$ and $\up s = \{ t \in S : t \geq s\}$. The subsemigroup $E = E(S)$ of idempotents of $S$ is commutative, and hence forms a (meet) semilattice for the natural partial order with $e \wedge f= ef$ for $e,f$ in $E(S)$.

There are a number of important equivalence relations defined on an inverse semigroup. The two that play an important role here are the $\mcH$- and 
$\mu$-relations. The $\mcH$-relation is defined by $s \mcH t$ if and only if $s^* s = t^* t$ and $s s^* = t t^*$. The $\mcH$-class of an idempotent $e$, denoted 
$H_e$, is the maximum subgroup of $S$ with identity $e$. The $\mu$-relation is defined by $s \mcmu t$ if and only if $ses^* = tet^*$ 
for all $e \in E$. We denote the $\mu$-class of an idempotent $e$ by $Z_e$; it is also a group with identity $e$, hence a subgroup of $H_e$. In fact, it is 
well-known that $\mu \subseteq \mcH$. If $\mu = \mcH$, we say that $S$ is \emph{cryptic}.

An equivalence relation $\rho$ on $S$ is a {\it congruence} if $(a,b), (c,d) \in \rho$ implies $(ac, bd) \in \rho$. The $\mu$-relation is always a congruence, though
$\mcH$ need not be in general. A relation $\rho$ is \emph{idempotent separating} if each $\rho$-class contains at most one idempotent. Both $\mcH$ and $\mu$
are idempotent separating. Moreover, it is well known that $\mu$ is the maximal idempotent separating congruence. Notice then that $S$ is cryptic if and only if
$\mcH$ is a congruence. It is also worth noting that the quotient of an inverse semigroup by a congruence is again an inverse semigroup.

Given a relation $\rho$ on $S$, we let $\Ker \rho$ denote the union of $\rho$-classes of idempotents. We refer to $\Ker \rho$ as the {\it kernel of $\rho$} 
(despite the typographical distinction in \cite[Section 5.1]{LawsonBook}). If $\rho$ is a congruence, it is straightforward to see that $\Ker \rho$ is a 
\emph{normal} subsemigroup---it contains all the idempotents and satisfies $s^* (\Ker \rho ) s \subseteq \Ker \rho$ for all $s \in S$. One can quickly see that 
\[ 
	\Ker \rho = \{ st^* \in S : \rho(s) = \rho(t) \}.
\]
Observe that the kernel of the $\mu$-relation is
\[
	\bigcup_{e \in E} Z_e = \{ s \in S : se = es \text{ for all } s \in S\} = Z(E),
\]
which we refer to as the \emph{centralizer of the idempotents}. This subsemigroup has a particularly simple form---it is just a union of
groups. Such an inverse semigroup is said to be \emph{Clifford}. Equivalently, an inverse semigroup $S$ is Clifford if and only $s^* s = s s^*$ for all $s \in S$. 

Notice that in a Clifford semigroup, every element is $\mu$-related to an idempotent. At the other end of the spectrum are the inverse semigroups for which the 
$\mu$-class of any idempotent is trivial. We say that $S$ is {\it fundamental} if $\mu$ is the identity relation. Notice that the inverse semigroup $S/\mu$ is always 
fundamental, and its semilattice of idempotents is isomorphic to $E$. Thus every inverse semigroup is an extension of a Clifford semigroup by a fundamental 
inverse semigroup.   

There is a useful technique for constructing examples of fundamental inverse semigroups with a specified semilattice of idempotents. Given a semilattice $E$, we let 
$T_E$ denote the isomorphisms of principal order ideals in $E$. Then $T_E$ is a fundamental inverse semigroup, called the {\it Munn semigroup} 
of the semilattice $E$. Munn \cite{MunnFun} proved that every fundamental inverse semigroup with semilattice $E$ is isomorphic to a full inverse subsemigroup of $T_E$.


Much of what we aim to do in this paper relies on groupoid techniques, so we present the necessary background here. Recall that a \emph{groupoid} consists 
of a set $G$ together with a set $G^{\scriptscriptstyle{(2)}} \subseteq G \times G$ (called \emph{composable pairs}), a map $(\gamma, \eta) \mapsto \gamma \eta$ from 
$G^{\scriptscriptstyle{(2)}} \to G$, and an involution $\gamma \mapsto \gamma^{-1}$ from $G \to G$ satisfying:
\begin{enumerate}
	\item $\gamma (\eta \xi) = (\gamma \eta) \xi$ whenever $(\gamma, \eta), (\eta, \xi) \in G^{\scriptscriptstyle{(2)}}$;
	\item $\gamma^{-1} (\gamma \eta) = \eta$ and $(\gamma \eta) \eta^{-1} = \gamma$ for $(\gamma, \eta) \in \G^{\scriptscriptstyle{(2)}}$.
\end{enumerate}
Elements $u \in G$ satisfying $u = u^2 = u^{-1}$ are called \emph{units}, and the set of all units is denoted by $G^{\scriptscriptstyle{(0)}}$ and called the 
\emph{unit space} of $G$. There are always two continuous surjections $r, d : G \to G^{\scriptscriptstyle{(0)}}$, defined by
\[
	r(\gamma) = \gamma \gamma^{-1}, \quad d(\gamma) = \gamma^{-1} \gamma
\]
and called the \emph{range} and \emph{source}, respectively. Among other things, $r$ and $d$ can be used to characterize composability:
$(\gamma, \eta) \in G^{\scriptscriptstyle{(2)}}$ if and only if $d(\gamma) = r(\eta)$. These maps can also be used to fiber $G$: given $u \in G^{\scriptscriptstyle{(0)}}$, 
it is fairly standard to write
\[
	G^u = r^{-1}(u), \quad G_u = d^{-1}(u), \quad G_u^u = G^u \cap G_u.
\]
It is easily checked that $G^u_u$ is a group with identity $u$, called the \emph{isotropy} (or \emph{stabilizer}) group at $u$. If we have $G^u = G_u = G_u^u$ for 
all $u$ (equivalently, $r(\gamma) = d(\gamma)$ for all $\gamma \in G$), we say $G$ is a \emph{group bundle}. Every groupoid contains a maximal group bundle, 
namely $\iso(G) = \bigcup_{u \in \goo} G_u^u$, which is called the \emph{isotropy bundle}. Since $r$ and $d$ are continuous, $\iso(G)$ is necessarily closed in $G$.

We are primarily interested in the case where $G$ is equipped with a topology such that multiplication and inversion are continuous. In particular, we will only 
deal with \emph{\'{e}tale groupoids}, where the topology is locally compact and the range and source maps are local homeomorphisms. In general, $G$ need
not be Hausdorff, though we will often need to add that requirement to our results. 

We will consider two closely related properties that frequently appear in the groupoid literature. An \'{e}tale groupoid $G$ is said to be \emph{essentially principal} if 
the interior of $\iso(G)$ is equal to $\goo$. An \'{e}tale groupoid $G$ is \emph{effective} if any open set $U \subseteq G \backslash \goo$ contains an element 
$\gamma$ with $d(\gamma) \neq r(\gamma)$. It is known (see \cite[Lemma 3.1]{BCFS}, for example) that under mild hypotheses these two properties are the same. 

\begin{prop}
	If $G$ is essentially principal, then it is effective. If $G$ is assumed to be Hausdorff, then the converse holds.
\end{prop}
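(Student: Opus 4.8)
The plan is to reduce everything to two elementary topological facts about the unit space $\goo$ inside an \'{e}tale (and, in the converse, Hausdorff) groupoid, after which both implications become short arguments with interiors. First I would record the preliminary observations I intend to use. Since $G$ is \'{e}tale, the range map is a local homeomorphism, so $\goo$ is open in $G$; because every unit $u$ is the identity of the isotropy group $G_u^u$, we have $\goo \subseteq \iso(G)$, and openness of $\goo$ then gives the inclusion $\goo \subseteq \operatorname{int}(\iso(G))$ for free, with no extra hypotheses. I would also restate effectiveness in the convenient form: $G$ is effective precisely when every nonempty open $U \subseteq G \setminus \goo$ contains a point of $G \setminus \iso(G)$, since $d(\gamma) \neq r(\gamma)$ is exactly the condition $\gamma \notin \iso(G)$.

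For the first implication I would argue by contradiction. Assume $G$ is essentially principal, so $\operatorname{int}(\iso(G)) = \goo$, and suppose effectiveness fails: there is a nonempty open $U \subseteq G \setminus \goo$ with $d(\gamma) = r(\gamma)$ for every $\gamma \in U$, i.e.\ $U \subseteq \iso(G)$. Then $U$ is an open set contained in $\iso(G)$, hence $U \subseteq \operatorname{int}(\iso(G)) = \goo$; but $U \subseteq G \setminus \goo$ and $U \neq \emptyset$, a contradiction. This direction uses only that $\goo$ is open, not Hausdorffness.

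For the converse I would add the Hausdorff hypothesis precisely to upgrade $\goo$ from open to clopen. In a Hausdorff groupoid $\goo = \{\gamma : r(\gamma) = \gamma\}$ is the equalizer of the continuous maps $r$ and $\id$, hence closed; combined with \'{e}taleness, $\goo$ is clopen and $G \setminus \goo$ is open. Now assuming $G$ effective, I would show $\operatorname{int}(\iso(G)) \subseteq \goo$, the reverse inclusion being the free one above. Set $V = \operatorname{int}(\iso(G)) \cap (G \setminus \goo)$; this is an intersection of two open sets, hence open, and by construction $V \subseteq \iso(G)$ and $V \subseteq G \setminus \goo$. If $V$ were nonempty it would be a nonempty open subset of $G \setminus \goo$ lying entirely in $\iso(G)$, contradicting effectiveness; thus $V = \emptyset$, which is exactly $\operatorname{int}(\iso(G)) \subseteq \goo$. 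Together with $\goo \subseteq \operatorname{int}(\iso(G))$ this yields equality, so $G$ is essentially principal.

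The only substantive point, and the reason the converse needs Hausdorffness, is the step asserting that $G \setminus \goo$ is open. Without it, $V$ need not be open, and one cannot manufacture the open set required to contradict effectiveness. I expect this to be the sole obstacle; everything else is formal manipulation of interiors together with the observation that $\goo$ is open in an \'{e}tale groupoid.
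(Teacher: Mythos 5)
Your proof is correct and follows essentially the same route as the paper's: the forward direction contradicts essential principality with an open subset of $\iso(G)$ missing $\goo$, and the converse uses Hausdorffness exactly to make $\goo$ clopen so that $\iso(G)^\circ \setminus \goo$ is open. The only difference is cosmetic---you phrase the converse as a direct argument showing $V = \iso(G)^\circ \cap (G \setminus \goo)$ is empty, while the paper takes the contrapositive with the same set.
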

\begin{proof}
	Assume first that $G$ is essentially principal, and suppose there is an open set $U \subseteq G \backslash \goo$ with the property that $d(\gamma) = 
	r(\gamma)$ for all $\gamma \in U$. Then $U \subseteq \iso(G)$, and $U \cap \goo = \emptyset$, so the interior of $\iso(G)$ is not equal to $\goo$. This 
	contradicts the fact that $G$ is essentially principal.
	
	Now assume that $G$ is Hausdorff. Then $\goo$ is clopen in $G$. If $G$ is not essentially principal, then $U = \iso(G)^\circ \backslash \goo$ is a 
	nonempty open subset of $G \backslash \goo$, and $d(\gamma) = r(\gamma)$ for all $\gamma \in U$. Thus $G$ is not effective.
\end{proof}

As described in Paterson \cite{PatersonBook} and Exel \cite{ExelBig}, inverse semigroups give rise to \'{e}tale groupoids in a very natural way. Let $S$ be an inverse semigroup, and let $X$ be a locally compact Hausdorff space. The \emph{symmetric inverse monoid} on $X$ is the set of all ``partial one-to-one maps'' on $X$,
\[
	\mathcal{I}(X) = \{ f : U \to V \mid U, V \subseteq X \text{ and } f \text{ is bijective} \},
\]
which is an inverse semigroup with multiplication given by composition on the largest possible domain. 
An \emph{action} of $S$ on $X$ is a semigroup homomorphism $\alpha : S \to \mathcal{I}(X)$ 
such that each $\alpha_s$ is continuous with open domain $D^{\alpha}_{s^*s}$, and the domains of the $\alpha_s$ cover $X$. Given such an action, we can build 
a groupoid $\G_\alpha$ as follows. Define an equivalence relation on $S * X = \{(s, x) \mid s \in S, x \in D^\alpha_{s^*s}\}$ by $(s, x) \sim (t, y)$ if and only if $x = y$ 
and there is an idempotent $e \in E(S)$ with $x \in D^\alpha_e$ and $se = te$. Now set $\G_\alpha = S*X/\sim$, and define
\[
	[t, \alpha_s(x)][s, x] = [ts, x], \quad [s, x]^{-1} = [s^*,\alpha_s(x)].
\]
These operations make $\G_\alpha$ into a groupoid with range and source 
\[
	r([s,x]) = [ss^*, \alpha_s(x)], \quad d([s, x]) = [s^*s, x],
\]
which allows us to identify $\G_\alpha^{\scriptscriptstyle{(0)}}$ with $X$. Moreover, $\G_\alpha$ naturally inherits a topology from that of $X$: given $s \in S$ and 
an open set $U \subseteq D^\alpha_{s^*s}$, define
\[
	\Theta(s, U) = \{ [s, x] \mid x \in U\}.
\]
These sets form a base for a topology on $\G_\alpha$, which is second countable if $X$ is. Furthermore, this topology makes $\G_\alpha$ into an \'{e}tale 
groupoid, which need not be Hausdorff in general. The groupoid $\G_\alpha$ is referred to as the {\it groupoid of germs} of the action $\alpha$.

There are two canonical actions, and hence groupoids, associated to any inverse semigroup $S$. Recall that a {\it filter} in $E=E(S)$ is a nonempty subset 
$F \subseteq E$ that is closed under multiplication, closed upward in the partial order, and does not contain the zero element of $S$ (if one exists). We denote the 
set of filters by $\fE$. Filters are in one-to-one correspondence with the set of characters on $E$ and thus they can be topologized using the product topology. A 
useful characterization of this topology on $\fE$ is that it has a base consisting of open sets of the following form:
\[ 
	N^{e}_{f_1,f_2, \dots, f_n} := \{ F \in \fE : e\in F \text{ and } f_i \not \in F \text{ for } 1 \leq i \leq n \}. 
\] 
There is a standard action $\beta$ of $S$ on $\fE$ defined as follows. Given $s \in S$, let $D^{\beta}_{s^* s} = \{ F \in \fE : s^* s \in F\}$. Then 
$\beta_s : D^{\beta}_{s^* s} \to D^{\beta}_{s s^*}$ is a continuous bijection defined by
\[
	\beta_s(F) = \{ f \in E : f \geq s e s^*, \text{ for some } e \in F \}.
\] 
for $F \in D^{\beta}_{s^* s}$. The {\it universal groupoid} of $S$, denoted by $\G(S)$, is the groupoid of germs of this action. It is well-known that the full and reduced
$C^*$-algebras of $\G(S)$ are isomorphic to $C^*(S)$ and $C_r^*(S)$, respectively (see Theorems 4.4.1 and 4.4.2 of \cite{PatersonBook}).

The other natural action of $S$ involves the set of \emph{tight filters}, denoted $\tfE$, which is defined to be the closure of the ultrafilters in $\fE$. This space is 
invariant under the action $\beta$. Following Exel, we use $\theta$ to denote the restriction of $\beta$ to the space of tight filters. The groupoid of germs of $\theta$ 
is called the {\it tight groupoid} of $S$ and is often denoted by $\Gt(S)$. 

One example that demonstrates the importance of the tight groupoid comes from the inverse semigroup of a directed graph $\Lambda$, discussed in detail in 
\cite{PatersonGraph}. Paterson shows that if $\Lambda$ is row-finite and does not contain sinks, then the tight groupoid is isomorphic to the usual path groupoid 
of $\Lambda$ (see the discussion following \cite[Theorem 1]{PatersonGraph}). Even without the assumption that $\Lambda$ is row-finite, the $C^*$-algebra of the 
tight groupoid is isomorphic to the graph algebra of $\Lambda$ \cite[Corollary 1]{PatersonGraph}.

The universal and tight groupoids need not be Hausdorff, though we will need them to be so for some results. We can guarantee that they are both Hausdorff
by requiring that $S$ is \emph{Hausdorff}: for all $s, t \in S$, the set ${\down s} \cap {\down t}$ is finitely generated as a lower set (cf. \cite{SteinbergSimple}).
It is well-known that any $E$-unitary (or $0$-$E$-unitary) inverse semigroup is automatically Hausdorff. More generally, the groupoid of germs of an action
$\alpha : S \to \mathcal{I}(X)$ is Hausdorff if $S$ is Hausdorff and the sets $D_e^\alpha$ are all clopen in $X$ by \cite[Proposition 2.3]{SteinbergSimple}.


\section{Clifford Semigroups and the Groupoid of the Centralizer}
\label{sec:clifford}
One of the main goals of this paper is to understand the relationship between the Clifford semigroup $Z(E)$ and the group bundle $\iso(\G(S))^\circ$ for 
an inverse semigroup $S$. Therefore, we need to know something about the structure of the universal groupoid associated to a Clifford semigroup. 

Suppose $S$ is a Clifford semigroup. We aim to show that the groupoid $\G(S)$ is a group bundle over $\fE$, and that the fibers of $\G(S)$ are closely 
related to the maximal subgroups of $S$. In particular, the fibers of $\G(S)$ are isomorphic to direct limits of subgroups of $S$, and they are equal to the maximal
subgroups of $S$ at principal filters.

Let $F \in \fE$ be a filter, and consider the family $\{H_e\}_{e \in F}$. The filter $F$ is, by definition, \emph{downward directed}: if $e, e' \in F$, then there 
is an $f \in F$ such that $f \leq e$ and $f \leq e'$. Moreover, there is a natural directed system of morphisms for the family $\{H_e\}_{e \in F}$: if $e \leq f$, 
define a restriction map $\varphi^f_e : H_f \to H_e$ by
\[
	\varphi^f_e(s) = se.
\]
It is easy to see that $\varphi^f_e$ is a homomorphism, thanks to the fact that idempotents are central:
\[
	\varphi^f_e(st) = (st)e = ste^2 = (se)(te) = \varphi^f_e(s) \varphi^f_e(t).
\]
Moreover, we have $\varphi_e^e = \id$ for each $e \in E(S)$, and if $e \leq f \leq g$, then
\[
	\varphi^g_f \circ \varphi^f_e(s) = \varphi^g_f(se) = sef = se = \varphi^g_e(s).
\]
Thus we can build the direct limit $\varinjlim_F H_e$, which we denote by $H_F$. We let $\varphi_e : H_e \to H_F$ denote the standard maps into the direct limit,
and for $s \in H_e$ we write $[s] = \varphi_e(s)$ to represent the equivalence class of $s$ in $H_F$.  Observe that if $s \in H_e$ and $t \in H_f$, then $[s] = [t]$ in $H_F$ if 
and only if there is an idempotent $g \in F$ such that $\varphi^e_g(s) = \varphi^f_g(t)$, or $sg = tg$. In other words, $[s]=[t]$ if and only if $s$ and $t$ have a common 
restriction.

We are now prepared to prove the main structure theorem for $\G(S)$ when $S$ is a Clifford semigroup. We should point out that the following result has already 
been observed by Paterson \cite[Section 4.3]{PatersonBook}. However, we find it instructive to include the details here.

\begin{thm}
\label{thm:clifford}
	Let $S$ be a Clifford semigroup, and let $\G(S)$ denote its universal groupoid. Then:
	\begin{enumerate}
		\item $\G(S)$ is a group bundle.
		\item Given $F \in \fE$, the fiber $\G(S)_F$ can be identified with $H_F = \varinjlim_F H_e$.
		\item If $F = {\uparrow e}$ is a principal filter, then $\G(S)_F \cong H_e$.
	\end{enumerate}
\end{thm}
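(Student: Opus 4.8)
For part (1) the plan is to exploit centrality of idempotents. Since $S$ is Clifford we have $se = es$ for every $e \in E$ together with $s^*s = ss^*$, so
\[
	s e s^* = e s s^* = e s^* s = (s^*s) e
\]
for all $s \in S$ and $e \in E$; that is, $ses^*$ is simply the meet $(s^*s) \wedge e$. I would feed this into the formula for $\beta_s$: if $s^*s \in F$, then for any $e \in F$ the element $(s^*s)e$ lies in $F$, so $\beta_s(F) \subseteq F$ by upward closure, while any $f \in F$ satisfies $f \geq (s^*s)f$ with $f \in F$, so $f \in \beta_s(F)$. Hence $\beta_s(F) = F$, which says $r([s,F]) = d([s,F])$ for every germ, and $\G(S)$ is a group bundle.

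Given part (1), the fiber $\G(S)_F$ is exactly the isotropy group $\{[s, F] : s^*s \in F\}$. The key observation for part (2) is that the domain condition $s^*s \in F$, combined with $s^*s = ss^*$, forces $s \in H_{s^*s}$, so each germ is represented by an element of a maximal subgroup indexed by $F$. I would define $\Phi : \G(S)_F \to H_F$ by $\Phi([s, F]) = \varphi_{s^*s}(s) = [s]$. Well-definedness and injectivity amount to reconciling the two equivalence relations: the germ relation supplies an idempotent $g \in F$ with $sg = tg$, and after replacing $g$ by $g(s^*s)(t^*t) \in F$ one may assume $g \leq s^*s$ and $g \leq t^*t$, which is precisely the condition defining $[s] = [t]$ in the direct limit. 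Surjectivity is immediate, since any $s \in H_e$ with $e \in F$ gives the germ $[s, F]$. For the homomorphism property I would use that the group-bundle product is $[s,F][t,F] = [st, F]$ (from $\beta_t(F) = F$ and the multiplication formula) and compute $[s][t]$ in $H_F$ by restricting both factors to $g = (s^*s)(t^*t) \in F$; centrality of $g$ yields $(sg)(tg) = stg$, and since $st \in H_{(s^*s)(t^*t)}$ this equals $\varphi_g(st) = [st]$.

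For part (3) I would observe that when $F = {\uparrow e}$ the idempotent $e$ is the least element of $F$, so every connecting map $\varphi^f_e : H_f \to H_e$ is defined and the direct system stabilizes at $H_e$; concretely $\varphi_e : H_e \to H_F$ is an isomorphism, and combining this with part (2) gives $\G(S)_F \cong H_e$. Alternatively one can exhibit the isomorphism directly as $[s, F] \mapsto se$: here $s^*s \geq e$ guarantees $se \in H_e$, and well-definedness, injectivity, surjectivity, and the homomorphism property all reduce to the identity $ge = e$ for $g \geq e$ together with centrality of $e$.

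The step I expect to demand the most care is the well-definedness and homomorphism check in part (2), since it hinges on moving freely between the germ equivalence and the direct-limit equivalence. The subtlety is that the germ relation only provides \emph{some} common restriction $g \in F$ with $sg = tg$, whereas the direct limit requires a restriction lying below both $s^*s$ and $t^*t$; the remedy is to multiply $g$ by the relevant idempotents, and tracking these restrictions — and identifying which maximal subgroup each product lands in — is where centrality of the idempotents must be invoked repeatedly.
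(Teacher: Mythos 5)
Your proposal is correct and follows essentially the same route as the paper: centrality of idempotents gives $\beta_s(F)=F$ for part (1), the fiber is identified with $\varinjlim_F H_e$ via $[s,F]\leftrightarrow[s]$ for part (2), and part (3) follows because the limit collapses to $H_e$ when $F=\up e$. The only differences are cosmetic — you construct the map $\G(S)_F\to H_F$ and verify well-definedness directly (and note the direct system stabilizes at the minimum), whereas the paper builds the inverse map $H_F\to\G(S)_F$ by invoking the universal property of the direct limit twice.
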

\begin{proof}
	To prove (1), it suffices to show that $d([s, F]) = r([s, F])$ for all $[s, F] \in \G(S)$. Recall that $d([s, F]) = [s^*s, F]$ and $r([s, F]) = [ss^*, \beta_s(F)]$, 
	where $\beta$ denotes the standard action of $S$ on $\fE$. We have $s^*s = ss^*$ by assumption, so we just need to check that $\beta_s(F) = F$. 
	Recall that
	\[
		\beta_s(F) = \{ f \in E(S) : ses^* \leq f \text{ for some } e \in F \}.
	\]
	If $e \in F$, then clearly $ses^* \in \beta_s(F)$. But idempotents in $S$ are central, so
	\[
		(ses^*)e = sees^* = ses^*.
	\]
	Thus $ses^* \leq e$, which forces $e \in F$ as well. Therefore, $F \subseteq \beta_s(F)$. On the other hand, suppose $f \in \beta_s(F)$. Then there 
	is an $e \in F$ with $ses^* \leq f$. But $ses^* = ss^*e = s^*se$, so $s^*se \leq f$. Note that $s^*s \in F$ and $e \in F$, so $s^*se \in F$, and therefore 
	$f \in F$ since $F$ is a filter. Thus $\beta_s(F) \subseteq F$ as well.
	
	For (2), we first need to show that there is a compatible family of maps $\psi_e : H_e \to \G(S)_F$. Well, define $\psi_e : H_e \to \G(S)_F$ by
	$\psi_e(s) = [s, F]$. It is immediate that $\psi_e$ is a homomorphism. Moreover, if $e \leq f$, we have
	\[
		\psi_e \circ \varphi^f_e(s) = \psi_e(se) = [se, F] = [s, F] = \psi_f(s).
	\]
	Thus the $\{\psi_e\}$ form a compatible family of morphisms, so there is a unique homomorphism $\Phi : H_F \to \G(S)_F$. A quick calculation shows 
	that $\Phi$ must be given by $\Phi([s]) = [s, F]$. It is clear that $\Phi$ is surjective, and if $\Phi([s]) = \Phi([t])$, then we have $[s, F] = [t, F]$, so there 
	exists $e \in E(S)$ such that $se = te$. But then $[s] = [t]$ in $H_F$, so $\Phi$ is injective. Therefore, $\G(S)_F$ is naturally isomorphic to $H_F$.

	Now suppose $F = {\uparrow e}$ is a principal filter. With (2) in hand, we only need to show that $\varinjlim_F H_f \cong H_e$ to establish (3). Note first 
	that we have a compatible family of morphisms $\psi_f : H_f \to H_e$, namely $\psi_f = \psi^f_e$. Thus the universal property of the direct limit guarantees 
	that there is a unique homomorphism $\Psi : H_F \to H_e$, and we just need to verify that $\Psi$ is an isomorphism. First, we must have 
	$\Psi \circ \varphi_e = \varphi_e^e = \id$. On the other hand, it is easy to verify that the diagram
	\[
	\xymatrix{
		H_f \ar[rr]^{\varphi^f_g} \ar[dr]_{\varphi_f} \ar@(d,l)[ddr]_{\varphi_f} & & H_g \ar[dl]^{\varphi_g} \ar@(d,r)[ddl]^{\varphi_g} \\
			& H_F \ar[d]^{\varphi_e \circ \Psi} & \\
			& H_F &
	}
	\]
	commutes. The identity function $\id : H_F \to H_F$ also makes the diagram commute, so we must have $\varphi_e \circ \Psi = \id$. Thus $\Psi$ is an 
	isomorphism. It follows that $\G(S)_F \cong H_e$, and it is worth noting that this isomorphism is nothing more than $\Phi(s) = [s, F]$.
\end{proof}

Given an inverse semigroup $S$, we want to use Theorem \ref{thm:clifford} to analyze the structure of $\G(Z)$, where $Z = Z(E)$ is the centralizer of the 
idempotents of $S$. In particular, it is natural to consider the relationship between the universal groupoid of $Z$ and the isotropy bundle of $\G(S)$.

\begin{prop} 
\label{prop:Z} 
	Let $S$ be an inverse semigroup, and let $Z = Z(E)$ denote the centralizer of the idempotents of $S$. Then there is a natural 
	embedding $\iota : \G(Z) \hookrightarrow \G(S)$ given by 
	\[
		\iota([s, F]) = [s, F].
	\]
	Moreover, $\iota(\G(Z))$ is open in $\G(S)$, so $\G(Z)$ can be identified with a subset of $\iso(\G(S))^\circ$.
\end{prop}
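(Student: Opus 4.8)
The plan is to verify that $\iota$ is a well-defined, injective groupoid homomorphism, then analyze its image topologically, and finally invoke the structure theorem to place that image inside the interior of the isotropy. The observation that drives everything is that every idempotent of $S$ centralizes $E(S)$, so $E(Z) = E(S)$; consequently $\G(Z)$ and $\G(S)$ are groupoids of germs for actions on the \emph{same} space $\fE$, and the standard action of $Z$ on $\fE$ is exactly the restriction of $\beta$ (the formula $\beta_s(F) = \{f : f \geq ses^* \text{ for some } e \in F\}$ refers only to products $ses^*$ and the partial order, both of which $Z$ inherits from $S$). First I would record that $\iota$ is simultaneously well defined and injective: for $s,t \in Z$ and $F \in \fE$ with $s^*s, t^*t \in F$, the germs $[s,F]$ and $[t,F]$ coincide in $\G(Z)$ if and only if $se = te$ for some idempotent $e \in F$, and since $E(Z) = E(S)$ this is precisely the condition for them to coincide in $\G(S)$. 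That $\iota$ respects the product $[t,\beta_s(F)][s,F] = [ts,F]$ and the inverse $[s,F]^{-1} = [s^*, \beta_s(F)]$ is then immediate, using that $Z$ is an inverse subsemigroup.

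Next I would establish the topological claims. For $s \in Z$ and an open set $U \subseteq D^\beta_{s^*s}$, the basic open set $\Theta(s,U)$ of $\G(Z)$ is carried by $\iota$ onto $\{[s,F] : F \in U\}$, which is exactly the basic open set $\Theta(s,U)$ of $\G(S)$; crucially, because $s \in Z$, this set already lies inside $\iota(\G(Z))$. Thus $\iota$ sends a base to open subsets of its image, so it is an open map onto its image, and a short argument (shrinking a neighborhood by intersecting with the open set $\{F : e \in F\}$ so that the relation $se = te$ persists) shows $\iota$ is continuous, hence a topological embedding. Writing $\iota(\G(Z)) = \bigcup_{s \in Z} \Theta(s, D^\beta_{s^*s})$ then exhibits the image as a union of basic open sets of $\G(S)$, which yields the openness asserted in the final clause.

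Finally, to locate the image inside the isotropy, I would apply Theorem \ref{thm:clifford}: since $Z = Z(E)$ is Clifford, $\G(Z)$ is a group bundle, so $d([s,F]) = r([s,F])$ for every germ. Because $\iota$ preserves range and source, each $\iota([s,F])$ satisfies $d = r$ in $\G(S)$ and therefore lies in $\iso(\G(S))$. As $\iota(\G(Z))$ is open in $\G(S)$ and contained in $\iso(\G(S))$, it lies in the interior $\iso(\G(S))^\circ$, which is the identification claimed. I expect the main obstacle to be the topological part rather than the algebraic bookkeeping: the algebra collapses cleanly once one notes $E(Z) = E(S)$, but one must track the base of open sets carefully to confirm both that $\iota$ is an embedding and that its image is open, since openness is exactly what upgrades containment in $\iso(\G(S))$ to containment in the interior $\iso(\G(S))^\circ$.
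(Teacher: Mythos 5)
Your proposal is correct and takes essentially the same approach as the paper: show that $\iota$ carries basic open sets $\Theta_Z(s,U)$ onto basic open sets $\Theta(s,U)$ of $\G(S)$, deduce that $\iota$ is an embedding with open image, and invoke Theorem \ref{thm:clifford} to conclude that the image of the group bundle $\G(Z)$ lies in $\iso(\G(S))^\circ$. The extra details you supply---the identification $E(Z)=E(S)$ underpinning well-definedness, and the explicit continuity argument via shrinking to $\{F : e \in F\}$---are points the paper dismisses as ``clear,'' so they represent added care rather than a different method.
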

\begin{proof}
	Clearly $\iota$ is a well-defined, injective groupoid homomorphism. We just need to verify that it is a homeomorphism onto its range, and that its 
	range is open in $\G(S)$. Recall that a base for the topology on $\G(Z)$ is given by sets of the form
	\[
		\Theta_Z(s, U) = \bigl\{ [s, F] \in \G(Z) \mid F \in U \bigr\},
	\]
	where $s \in Z$ and $U \subseteq \fE$ is open. It is immediate that 
	\[
		\iota(\Theta_Z(s, U)) = \bigl\{ [s, F] \in \G \mid F \in U \bigr\} = \Theta(s, U) ,
	\]
	so $\iota$ maps basic open sets in $\G(Z)$ to basic open sets in $\G(S)$. It follows that $\iota$ is a homeomorphism onto its range. Moreover, this shows 
	that if $[s, F] \in \iota(\G(Z))$ and $U \subseteq \fE$ is any open set, then $\Theta(s, U) \subseteq \iota(\G(Z))$. Thus $\iota(\G(Z))$ is open in $\G(S)$. 
	Theorem \ref{thm:clifford} guarantees that $\G(Z)$ is a group bundle, hence $\iota(\G(Z)) \subseteq \iso(\G(S))^\circ$.
\end{proof}

We will henceforth identify $\G(Z)$ with its image in $\G(S)$. We have just shown that $\G(Z) \subseteq \iso(\G(S))^\circ$, and Theorem \ref{thm:clifford} also 
tells us about the structure of the fibers---at any principal filter $\up e \in \fE$, we have $\G(Z)_{\up e} = Z_e$, where $Z_e$ denotes the $\mu$-class of $e$. It 
is then natural to ask whether this group equals the isotropy group $\G_{\up e}^{\up e}$. In general, the answer is no.

\begin{prop}
\label{prop:hclass}
	Let $S$ be an inverse semigroup. For any $e \in E(S)$, the isotropy group $\G(S)_{\up e}^{\up e}$ is equal to the $\mcH$-class $H_e$.
\end{prop}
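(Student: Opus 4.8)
The plan is to pin down $\G(S)_{\up e}^{\up e}$ explicitly and then match it with $H_e$ via the obvious map $s \mapsto [s, \up e]$. First I would determine which germs lie in the isotropy group. Since the unit space is identified with $\fE$ through $d([s,F]) = F$ and $r([s,F]) = \beta_s(F)$, a germ $[s,F]$ lies in $\G(S)_{\up e}^{\up e}$ precisely when $F = \up e$ and $\beta_s(\up e) = \up e$, while $[s, \up e]$ is defined only when $s^*s \in \up e$, i.e. $e \le s^*s$. The key preliminary computation is that $\beta_s(\up e) = \up(s e s^*)$: the inclusion $\up(ses^*) \subseteq \beta_s(\up e)$ is immediate from the definition of $\beta_s$ with witness $e \in \up e$, and for the reverse inclusion one checks that $se's^* \ge ses^*$ whenever $e' \ge e$, using only that idempotents commute (both $ses^*$ and $se's^*$ are idempotents, and $(ses^*)(se's^*) = ses^*$). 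Because principal filters satisfy $\up a = \up b \iff a = b$, the requirement $\beta_s(\up e) = \up e$ reduces to $ses^* = e$. Hence
\[
	\G(S)_{\up e}^{\up e} = \bigl\{\, [s, \up e] : e \le s^*s \ \text{and}\ ses^* = e \,\bigr\}.
\]

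Next I would introduce the homomorphism $\kappa : H_e \to \G(S)$, $\kappa(s) = [s, \up e]$. For $s \in H_e$ we have $s^*s = ss^* = e$, so $se = s$ and $ses^* = ss^* = e$; by the description above this places $\kappa(s)$ in the isotropy group. That $\kappa$ is a homomorphism follows from the germ product $[s, \beta_t(\up e)][t, \up e] = [st, \up e]$ together with $\beta_t(\up e) = \up(tet^*) = \up e$ for $t \in H_e$. Injectivity is routine: if $[s, \up e] = [t, \up e]$ with $s, t \in H_e$, then $sg = tg$ for some idempotent $g \ge e$, and multiplying on the right by $e$ gives $se = te$, i.e. $s = t$ since $se = s$ and $te = t$.

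The substance of the argument is surjectivity, and I expect this to be the main obstacle, because a germ in $\G(S)_{\up e}^{\up e}$ is represented by an $s$ that need only satisfy $e \le s^*s$ and $ses^* = e$, and such an $s$ need not itself lie in $H_e$. The idea is to restrict: set $t = se$. Then $tt^* = se s^* = ses^* = e$ and $t^*t = e(s^*s)e = e$, the latter because $e \le s^*s$ forces $(s^*s)e = e$; hence $t \in H_e$. Since $t$ and $s$ agree after multiplication by the idempotent $e \in \up e$, we get $[t, \up e] = [s, \up e]$, so $\kappa(t)$ is the given germ. This shows $\kappa$ is an isomorphism of $H_e$ onto $\G(S)_{\up e}^{\up e}$, yielding the asserted equality. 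It is worth remarking that this refines Theorem \ref{thm:clifford}(3) and Proposition \ref{prop:Z}, which identify only the smaller group $\G(Z)_{\up e} = Z_e \subseteq H_e$.
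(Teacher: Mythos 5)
Your proof is correct and takes essentially the same route as the paper: the same map $s \mapsto [s, \up e]$, the same key computation $\beta_s(\up e) = \up\!(ses^*)$ forcing $ses^* = e$, the same restriction trick $t = se$ for surjectivity, and the same multiply-by-$e$ argument for injectivity. The only difference is organizational---you characterize $\G(S)_{\up e}^{\up e}$ explicitly up front, while the paper performs that computation inline within the surjectivity step.
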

\begin{proof}
	Fix $e \in E(S)$. We claim that the map $s \mapsto [s, \up e]$ defines an isomorphism of $H_e$ onto $\G(S)_{\up e}^{\up e}$. Given $s \in H_e$, we need to 
	first show that $[s, \up e] \in \iso(\G(S))$. That is, we must have $[s^*s, \up e] = [ss^*, \theta_s(\up e)]$. Well, $s^*s = ss^* = e$ since $s \in H_e$, so we just
	need to check that $\theta_s(\up e) = \up e$. But it is easy to see that
	\[
		\theta_s(\up e) = \theta_s(\up s^*s) = \up ss^* = \up s^*s = \up e.
	\]
	Thus $[s, \up e] \in \G(S)_{\up e}^{\up e}$.
	
	Now suppose that $[s, \up e] \in \G(S)_{\up e}^{\up e}$, so $[s^*s, \up e] = [ss^*, \theta_s(\up e)]$. Then $\up e = \theta_s(\up e) = \up ses^*$, so we must 
	have $e = ses^*$. Put $t = se$. Then
	\[
		t^*t = (se)^*(se) = es^*se = e,
	\]
	since $e \leq s^*s$. Similarly,
	\[
		tt^* = (se)(se)^* = sees^* = ses^* = e.
	\]
	Thus $t \in H_e$ and $[t, \up e] = [s, \up e]$, so $H_e$ is mapped onto the isotropy group $\G(S)_{\up e}^{\up e}$. It only remains to show that this mapping is
	injective. Suppose $s, t \in H_e$ and $[s, \up e] = [t, \up e]$. Then there is an idempotent $f \in \up e$ such that $sf = tf$. But $e \leq f$ and $se = s$, so we 
	have
	\[
		sf = sef = se = s.
	\]
	Similarly, $tf = t$, so $s = t$.
\end{proof}

In light of Propositions \ref{prop:Z} and \ref{prop:hclass}, we have the following chain of containments for any $e \in E(S)$:
\begin{equation}
\label{eq:chain}
	Z_e = \G(Z)_{\up e} \subseteq \iso(\G(S))^\circ_{\up e} \subseteq \iso(\G(S))_{\up e} = H_e.
\end{equation}
Clearly, the second containment may turn out to be proper, say if $\iso(\G(S))$ is not open in $\G(S)$. A more interesting observation is that the first containment may 
also be proper. That is, $\G(Z)$ may not be all of $\iso(\G(S))^{\circ}$ in general, as the next example shows. 

\begin{exmp}
\label{exmp:diamond}
Consider the semilattice $E=\{0,a,b,1\}$, where $0 \leq a,b \leq 1$, and $a$ is not comparable to $b$. The set $T_E$ of order isomorphisms of principal order ideals 
of $E$ is a fundamental inverse semigroup. The semilattice $E$ and the $\mathcal{D}$-classes of $T_E$ are pictured below. The inverse semigroup $B_2$ is a combinatorial Brandt semigroup consisting of the maps $a \mapsto b$, $a \mapsto a$, $b \mapsto a$, $b \mapsto b$, and $0$.  
\begin{center}\begin{tikzpicture}
	\node (1) at (2,2) {$1$};
	\node (a) at (1,1) {$a$};
	\node (b) at (3,1) {$b$};
	\node (0) at (2,0) {$0$};
	\draw (0)--(a)--(1)--(b)--(0);
	
	\node (z) at (5,2) {$\mathbb{Z}_2 = \{1,-1\}$};
	\node (b) at (5,1) {$B_2 \setminus \{0\}$};
	\node (0) at (5,0) {$0$};
	\draw (0)--(b)--(z);

\end{tikzpicture}
\end{center}
Since $T_E$ is fundamental, $Z(E) = E$ and hence $\G(Z)$ is principal. However, there is a nontrivial isomorphism of $\down 1$ sending $a$ to $b$, which
we denote by $-1$. It is easy to verify that 
\[ 
	[-1,\{1\}] \in \iso(\G(S)) \text{ and } \Theta(-1, N^{1}_{a,b}) = \{[-1,\{1\}]\}.
\] 
Thus $[-1,\{1\}] \in \iso(\G(S))^{\circ}$. Since $[-1,\{1\}]$ is not a unit, we conclude that $\G(Z)$ is properly contained in $\iso(\G(S))^{\circ}$.
\end{exmp}

The real issue in Example \ref{exmp:diamond} is that the subgroup $Z_e$ at an idempotent $e$ may be properly contained in the maximal subgroup $H_e$. This 
happens precisely when the $\mu$-relation is properly contained in the $\mathcal{H}$-relation, or equivalently when $\mathcal{H}$ fails to be 
a congruence. Recall that an inverse semigroup $S$ is called \textit{cryptic} if $\mu=\mcH$. It is immediate that if $S$ is cryptic, then \eqref{eq:chain} is simply a 
chain of equalities, so we have $\G(Z)_{\up e} = \iso(\G(S))^\circ_{\up e}$ at any principal filter $\up e$. In fact, this condition is sufficient to guarantee equality at 
\emph{every} filter. To prove it, we will invoke the idea of a \emph{continuously varying group bundle}.

Let $G$ be a group bundle. Recall that $G$ is said to be \emph{continuously varying} if the map $u \mapsto G_u$ is continuous from $\goo \to \mathscr{C}(G)$, 
where $\mathscr{C}(G)$ is the set of all closed subsets of $G$ endowed with the Fell topology. (A detailed description of the Fell topology is given in Appendix H 
of \cite{TFB2}.) In particular, we say a groupoid has \emph{continuously varying stabilizers} if its isotropy bundle is continuously varying. It is well-known 
\cite{geoff,lalondethesis,renault91} that a group bundle $G$ varies continuously if and only if its range map (which is equal to the source map) is open, which in turn 
is equivalent to the existence of a Haar system on $G$. It is not necessary to assume that $G$ is Hausdorff.

We will add a fourth condition that is equivalent to a groupoid $G$ having continuously varying stabilizers, at least when $G$ is \'{e}tale. The following result is 
not necessary for what follows, and it is probably already known to groupoid specialists, but we include it since it gives us a nice characterization of when 
$\iso(G)$ is open.

\begin{prop}
	Let $G$ be a locally compact (not necessarily Hausdorff) \'{e}tale groupoid. The isotropy bundle $\iso(G)$ is open in $G$ if and only if $G$ has continuously
	varying stabilizers.
\end{prop}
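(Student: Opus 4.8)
The plan is to reduce the proposition to the characterization recalled just above the statement, and then to carry out the remaining topological work locally on bisections. Since $\iso(G)$ is the maximal group bundle inside $G$, with unit space $\goo$, the phrase ``$G$ has continuously varying stabilizers'' means precisely that the group bundle $\iso(G)$ is continuously varying. By the cited equivalence (continuous variation of a group bundle is the same as openness of its range map, with no Hausdorff hypothesis needed), this holds if and only if the map $r|_{\iso(G)} : \iso(G) \to \goo$ is open. So it suffices to prove that $\iso(G)$ is open in $G$ if and only if $r|_{\iso(G)}$ is open.

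For the forward direction, I would suppose $\iso(G)$ is open in $G$. Because $G$ is \'{e}tale, $r$ is a local homeomorphism and hence an open map; the restriction of an open map to an open subspace is again open, so $r|_{\iso(G)}$ is open. This direction is routine and carries no real content.

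The substance is the converse, and here I would work locally on bisections. Assuming $r|_{\iso(G)}$ is open, I would fix $\gamma \in \iso(G)$ and choose an open bisection $U \subseteq G$ containing $\gamma$, so that $r|_U : U \to r(U)$ is a homeomorphism onto an open subset of $\goo$. Setting $\tau = d \circ (r|_U)^{-1} : r(U) \to \goo$, for $\eta \in U$ with $x = r(\eta)$ one has $d(\eta) = \tau(x)$, so $\eta$ lies in $\iso(G)$ exactly when $x$ is a fixed point of $\tau$; thus $\iso(G) \cap U = (r|_U)^{-1}(P)$ and $r(\iso(G) \cap U) = P$, where $P = \{x \in r(U) : \tau(x) = x\}$. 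Since $\iso(G) \cap U$ is open in $\iso(G)$ (because $U$ is open in $G$), openness of $r|_{\iso(G)}$ forces $P = r(\iso(G)\cap U)$ to be open in $\goo$. Pulling back through the homeomorphism $r|_U$ then shows $\iso(G) \cap U$ is open in $U$, hence open in $G$, and this is an open neighborhood of $\gamma$ contained in $\iso(G)$. As $\gamma$ is arbitrary, $\iso(G)$ is open.

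The main obstacle is precisely this converse. The key realization is that on a bisection the elements of $\iso(G)$ are carried \emph{bijectively} by $r$ onto the fixed-point set $P$ of the local transition map $\tau$, so that openness of $r|_{\iso(G)}$ translates exactly into $P$ being open, which is what lets me recognize $\iso(G)$ as locally the $r$-preimage of an open set. I would take care that the entire argument is local on bisections and therefore never uses Hausdorffness of $G$, in keeping with the stated hypotheses.
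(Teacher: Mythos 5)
Your proof is correct, but the substantive direction is argued quite differently from the paper. Both proofs handle the easy direction identically (openness of $\iso(G)$ plus \'{e}taleness gives openness of $r|_{\iso(G)}$, hence continuous variation by the cited characterization). For the converse, the paper argues the contrapositive: assuming $\iso(G)$ is not open, it picks $u$ with $G_u^u \neq \iso(G)_u^\circ$, invokes \cite[Lemma 3.3]{BNRSW} to produce a net $u_i \to u$ along which $G_{u_i}^{u_i} = \iso(G)^\circ_{u_i}$, uses continuous variation of the open bundle $\iso(G)^\circ$ together with a Fell-topology compatibility lemma (\cite[Lemma H.2]{TFB2}) to get $G_{u_i}^{u_i} \to \iso(G)_u^\circ$ in $\mathscr{C}(G)$, and concludes from Hausdorffness of the Fell topology that the stabilizers cannot vary continuously. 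You instead use the cited equivalence in \emph{both} directions, reducing the whole proposition to the statement that $\iso(G)$ is open iff $r|_{\iso(G)}$ is open, and then prove the nontrivial implication directly: on an open $r$-section $U$ through a point of $\iso(G)$, the isotropy is exactly $(r|_U)^{-1}(P)$ where $P$ is the fixed-point set of the transition map $\tau = d \circ (r|_U)^{-1}$, and openness of $r|_{\iso(G)}$ says precisely that $P = r(\iso(G) \cap U)$ is open, so $\iso(G) \cap U$ is open in $G$. Your route is more elementary and self-contained --- it dispenses with the Fell topology, the BNRSW net lemma, and the compatibility check between $\mathscr{C}(\iso(G)^\circ)$ and $\mathscr{C}(G)$, and it makes visible exactly where openness of the range map enters. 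The trade-off is that your argument leans on the full strength (both directions) of the cited ``continuously varying $\iff$ open range map'' result for non-Hausdorff group bundles, whereas the paper only needs the easier implication (open range map $\Rightarrow$ continuously varying); since the paper asserts that equivalence without Hausdorff hypotheses, this is a legitimate dependence, not a gap.
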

\begin{proof}
	Suppose first that $\iso(G)$ is open. Then the restriction of the range map to $\iso(G)$ is an open map, which implies that $\iso(G)$ is continuously varying.
	
	Now suppose $\iso(G)$ is not open, i.e., $\iso(G)^\circ \neq \iso(G)$. Then there is a $u \in \goo$ such that $G_u^u \neq \iso(G)_u^\circ$.
	Now \cite[Lemma 3.3]{BNRSW} guarantees that there is a net $\{u_i\}$ in $\goo$ such that $u_i \to u$ and $G_{u_i}^{u_i} = \iso(G)_{u_i}^\circ$
	for all $i$. Since $\iso(G)^\circ$ is open in $G$, it is a continuously varying group bundle, so we have
	\[
		G_{u_i}^{u_i} = \iso(G)^\circ_{u_i} \to \iso(G)_u^\circ
	\]
	with respect to the Fell topology on $\mathscr{C}(\iso(G)^\circ)$. We need to check that this implies convergence in $\mathscr{C}(G)$.
	
	First observe that $G_u^u$ is closed in $G$, and $\iso(G)_u^\circ$ is closed in $G_u^u$ (since both are discrete groups), so $\iso(G)_u^\circ$
	is closed in $G$. Therefore, $\iso(G)_u^\circ \in \mathscr{C}(G)$. Now it is clear that conditions (a) and (b) of \cite[Lemma H.2]{TFB2} hold for the 
	net $\{G_{u_i}^{u_i}\}$, since $G_{u_i}^{u_i} \to \iso(G)_u^\circ$ in $\iso(G)^\circ$. Therefore, $G_{u_i}^{u_i} \to \iso(G)_u^\circ$ in $\mathscr{C}(G)$. 
	Since the Fell topology is Hausdorff and $G_u^u \neq \iso(G)_u^\circ$ by assumption, $G_{u_i}^{u_i}$ cannot converge to $G_u^u$. Hence $G$ does 
	not have continuously varying stabilizers.
\end{proof}

\begin{prop} 
\label{prop:cryptic}
	Let $S$ be an inverse semigroup with centralizer $Z$. If $S$ is cryptic, then $\G(Z) = \iso(\G(S))^{\circ}$.  
\end{prop}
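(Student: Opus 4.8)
The inclusion $\G(Z) \subseteq \iso(\G(S))^{\circ}$ is already supplied by Proposition \ref{prop:Z}, so the plan is to establish the reverse containment fiberwise, by comparing the two group bundles over a dense set of units and then propagating the equality by continuity. Write $K = \iso(\G(S))^{\circ}$ for brevity. First I would record that both $\G(Z)$ and $K$ are open group bundles over $\fE$: $\G(Z)$ is a group bundle by Theorem \ref{thm:clifford} and is open in $\G(S)$ by Proposition \ref{prop:Z}, while $K$ is open by definition and is a group bundle since it lies in the isotropy. Being open subsets of the \'{e}tale groupoid $\G(S)$, each is itself \'{e}tale, so its range map is open; by the characterization recorded above (open range map $\Leftrightarrow$ continuously varying), both $\G(Z)$ and $K$ have continuously varying fibers.

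Next I would show the principal filters are dense in $\fE$. This is quick: given $F \in \fE$ and a basic neighborhood $N^{e}_{f_1,\dots,f_n}$ of $F$ (so $e \in F$ and $f_i \notin F$), the principal filter $\up e$ lies in that neighborhood, since $e \le f_i$ would force $f_i \in F$. Over such a principal filter, crypticity collapses the chain \eqref{eq:chain}: Proposition \ref{prop:hclass} gives $\iso(\G(S))_{\up e} = H_e$, while $\G(Z)_{\up e} = Z_e$, and $Z_e = H_e$ precisely because $\mu = \mcH$. Squeezing $\G(Z)_{\up e} \subseteq K_{\up e} \subseteq \iso(\G(S))_{\up e}$ then yields $\G(Z)_{\up e} = K_{\up e}$ at every principal filter.

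Finally I would promote this to an arbitrary filter. Fix $F \in \fE$ and $\gamma \in K_F$; I must produce $\gamma \in \G(Z)$. Choose an open bisection $V \subseteq K$ containing $\gamma$ on which $d$ restricts to a homeomorphism onto an open set $d(V) \ni F$. Since $\G(Z)$ is open in $K$, the set $V \cap \G(Z)$ is open in $V$, so $d(V \cap \G(Z))$ is open in $d(V)$; by the previous paragraph it contains every principal filter in $d(V)$, hence is dense in $d(V)$. The crux is to show $d(V \cap \G(Z))$ is also \emph{closed} in $d(V)$, for an open, dense, closed subset must be all of $d(V)$, whence $\gamma \in \G(Z)$. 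Closedness is where continuous variation of $\G(Z)$ enters: if $F_i \to F'$ in $d(V)$ with the $V$-points $\gamma_{F_i} \in \G(Z)$, then $\gamma_{F_i} \to \gamma_{F'}$ with $\gamma_{F_i} \in \G(Z)_{F_i}$, continuous variation gives $\G(Z)_{F_i} \to \G(Z)_{F'}$ in the Fell topology, and the upper-semicontinuity half of Fell convergence forces $\gamma_{F'} \in \G(Z)_{F'}$.

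The step I expect to be the main obstacle is exactly this last closedness argument, for two reasons. First, one must know the fibers are genuine closed sets in the ambient groupoid so that they are legitimate points of the Fell hyperspace; this holds because each fiber is a subgroup of the discrete isotropy group $\G(S)_F^F$, which is closed in $\G(S)$ (as $\fE$ is Hausdorff and $\iso(\G(S))$ is closed), exactly as in the proof of the preceding proposition. Second, the continuous-variation hypothesis is phrased inside $\mathscr{C}(\G(Z))$, whereas I need convergence in $\mathscr{C}(\G(S))$, and $\G(S)$ need not be Hausdorff; transferring Fell convergence across these hyperspaces is the delicate point and should be handled as in that earlier proof, via \cite[Lemma H.2]{TFB2}. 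The saving grace is that all the relevant elements sit in the isotropy over a single filter, a discrete group, so I can separate $\gamma_{F'}$ from $\G(Z)_{F'}$ inside a bisection and run the upper-semicontinuity estimate despite the lack of global Hausdorffness.
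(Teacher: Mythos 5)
You follow the paper's skeleton exactly: openness gives continuous variation, principal filters are dense, crypticity collapses \eqref{eq:chain} at principal filters, and fiberwise equality is then propagated ``by continuity.'' The gap is in the step you yourself flag as the crux: closedness of $d(V \cap \G(Z))$ in $d(V)$ does not follow from continuous variation of $\G(Z)$, and the proposed repair via \cite[Lemma H.2]{TFB2} cannot work. Continuous variation is continuity of $F \mapsto \G(Z)_F$ into $\mathscr{C}(\G(Z))$, and its upper-semicontinuity half only constrains limits that stay inside $\G(Z)$: a Fell-basic open set excludes a \emph{compact} subset of $\G(Z)$, and if $\gamma_{F'} \notin \G(Z)$ then no compact $C \subseteq V \cap \G(Z)$ can contain a cofinal subnet of the $\gamma_{F_i}$, since such a subnet would have to cluster in $C$, while (because $d(V)$ is Hausdorff) its only cluster point in $V$ is $\gamma_{F'}$, which lies outside $\G(Z)$. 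So the intrinsic Fell convergence says nothing about your net. Worse, the hoped-for transfer of Fell convergence from $\mathscr{C}(\G(Z))$ to $\mathscr{C}(\G(S))$ is genuinely false for open subbundles that are not closed: the paper's own example at the end of Section \ref{sec:amenability} exhibits an open (hence continuously varying) bundle $\iso(\G(S))^{\circ}$ with $[\alpha, \up x_i] \in \iso(\G(S))^{\circ}_{\up x_i}$, $\up x_i \to \up 1$, and $[\alpha, \up x_i] \to [\alpha, \up 1] \notin \iso(\G(S))^{\circ}_{\up 1}$, so the fibers converge in $\mathscr{C}(\iso(\G(S))^{\circ})$ but condition (b) of \cite[Lemma H.2]{TFB2} fails in $\mathscr{C}(\G(S))$. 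Ambient upper semicontinuity is therefore not a formal consequence of openness; it has to come from crypticity, which your closedness argument never invokes. (The paper's one-sentence ``continuity'' step is compressed past this same point, so your instinct about where the difficulty lies was correct.)

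The step can be repaired by a germ computation that bypasses the hyperspace machinery, and indeed the whole open/dense/closed scheme. Let $\gamma = [s,F] \in \iso(\G(S))^{\circ}_F$ and choose a basic set $\Theta(s,U) \subseteq \iso(\G(S))^{\circ}$ containing $\gamma$. Pick a basic neighborhood $N^{e}_{f_1,\dots,f_n} \subseteq U$ of $F$ and set $e' = es^*s$, so that $e' \in F$ and $e' \leq s^*s$. Then $\up e' \in N^{e}_{f_1,\dots,f_n}$: certainly $e \geq e'$, and if some $f_i \geq e'$ then $f_i \in F$, a contradiction. Hence $[s, \up e'] \in \Theta(s,U) \subseteq \iso(\G(S))^{\circ}$, and your principal-filter case gives $[s,\up e'] \in \iso(\G(S))^{\circ}_{\up e'} = \G(Z)_{\up e'}$. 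Thus $sg = zg$ for some $z \in Z$ and $g \geq e'$, so $se' = sge' = zge' \in Z$, and therefore $\gamma = [se', F] \in \G(Z)_F$. Note that this argument uses density of principal filters in the refined form that $\up e' \to F$ along $e' \in F$ (one needs the approximating idempotent to lie in $F$ itself in order to pull the conclusion back to the germ at $F$); mere density of the set of all principal filters, which is all your open/dense step records, would not suffice.
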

\begin{proof}
	Since $\G(Z)$ and $\iso(\G(S))^\circ$ are open in $\G(S)$, they are both continuously varying. The set of principal filters is dense in $\fE$ by 
	\cite[Proposition 1.2]{PatersonClifford}, and we have $\G(Z)_{\up e} = \iso(\G(S))^\circ_{\up e}$ at every principal filter. Continuity then implies that 
	$\G(Z)_F = \iso(\G(S))^\circ_F$ at any filter $F \in \fE$.
\end{proof}


\section{Embeddings and Uniqueness}
\label{sec:unique}

We saw in the last section that if $S$ is an inverse semigroup with centralizer $Z$, then $\G(Z)$ always embeds into $\G(S)$ as an open subgroupoid. This gives
us an initial step toward stating a uniqueness theorem for $C_r^*(S)$ in terms of the distinguished subalgebra $C_r^*(Z)$. Of course this assumes that
the embedding $\G(Z) \hookrightarrow \G(S)$ yields an embedding at the level of $C^*$-algebras.

In \cite[Theorem 3.1(a)]{BNRSW}, it is proven that for a locally compact Hausdorff \'{e}tale groupoid $G$, there is a natural embedding $C_r^*(\iso(G)^\circ) 
\hookrightarrow C_r^*(G)$. Furthermore, there is a homomorphism $\iota : C^*(\iso(G)^\circ) \to C^*(G)$, which is injective provided $\iso(G)^\circ$ is
amenable. We can state a more general version of this result, once we distill the essential properties of $\iso(G)^\circ$. In particular, we need the following 
definitions. If $G$ is a groupoid and $\K \subseteq G$ is a subgroupoid, we say $\K$ is \emph{wide} if $\ko = \goo$. We say $\K$ is \emph{normal} if 
$\gamma^{-1}\K \gamma \subseteq \K$ for all $\gamma \in G$. A groupoid $G$ has \emph{weak containment} if $C^*(G) = C_r^*(G)$.

We now state our version of \cite[Theorem 3.1(a)]{BNRSW}. The proof from \cite{BNRSW} works almost verbatim in this case, so we omit the details.

\begin{thm}
\label{thm:inclusion}
	Let $G$ be a locally compact Hausdorff \'{e}tale groupoid, and suppose $\K \subseteq G$ is a group bundle that is open and wide in $G$.
	Then the $*$-homomorphism $\iota_0 : C_c(\K) \to C_c(G)$ defined by
	\[
		\iota_0(f)(\gamma) = \begin{cases}
			f(\gamma) & \text{ if } \gamma \in \K \\
			0 & \text{ otherwise}
		\end{cases}
	\]
	extends to a homomorphism $\iota : C^*(\K) \to C^*(G)$, which is injective provided $\K$ has weak containment (in particular, if $\K$ is amenable). If $\K$ 
	is normal, then $\iota$ descends to an injective homomorphism $\iota_r : C_r^*(\K) \to C_r^*(G)$.
\end{thm}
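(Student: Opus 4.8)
The plan is to mimic the four-step skeleton of \cite[Theorem 3.1]{BNRSW}, checking at each stage that the only properties of the subgroupoid actually invoked are the ones we have isolated: that $\K$ is open, wide, and a group bundle (and, for the last clause, normal). First I would verify that $\iota_0$ is a well-defined $*$-homomorphism. Since $G$ is Hausdorff and $\K$ is open, extension by zero carries $C_c(\K)$ into $C_c(G)$ continuously, so $\iota_0$ is a well-defined linear, $*$-preserving map. For multiplicativity I would compute $\iota_0(f)*\iota_0(g)$ using the counting-measure Haar system of the \'{e}tale groupoid $G$: a summand $f(\alpha)g(\alpha^{-1}\gamma)$ can be nonzero only when both $\alpha$ and $\alpha^{-1}\gamma$ lie in $\K$, and since $\K$ is a subgroupoid this forces $\gamma=\alpha(\alpha^{-1}\gamma)\in\K$. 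Thus the convolution is supported in $\K$, and because $\K$ is open the fibrewise sums computing the two products agree term by term, giving $\iota_0(f)*\iota_0(g)=\iota_0(f*g)$. Producing the map on full algebras is then formal: composing $\iota_0$ with the universal representation of $C_c(G)$ yields a representation of $C_c(\K)$ that is bounded in the $I$-norm, so the universal property of $C^*(\K)$ extends it to $\iota\colon C^*(\K)\to C^*(G)$.

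For the reduced statement the crux is that $\iota_0$ is isometric for the reduced norms. I would examine each regular representation $\lambda_u$ of $G$ on $\ell^2(G_u)$ and decompose $\ell^2(G_u)=\bigoplus_v \ell^2(G_u^v)$ according to the value of the range map. The group-bundle hypothesis forces $\iota_0(f)$ to preserve each summand, and on $\ell^2(G_u^v)$ it acts by the left regular representation of the group $\K_v^v$, since left multiplication makes $G_u^v$ a disjoint union of free $\K_v^v$-orbits, each isomorphic as a $\K_v^v$-set to $\K_v^v$. Hence $\norm{\lambda_u(\iota_0 f)}=\sup_v \norm{f|_{\K_v^v}}_{C_r^*(\K_v^v)}$, and taking the supremum over $u$---using wideness, so that $v$ ranges over all of $\goo$---gives $\norm{\iota_0(f)}_{C_r^*(G)}=\norm{f}_{C_r^*(\K)}$. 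This estimate extends $\iota_0$ to $\iota_r\colon C_r^*(\K)\to C_r^*(G)$; normality of $\K$, which makes conjugation by any $\gamma\in G$ preserve $\K$, is the hypothesis under which one identifies $\iota_r$ as the descent of $\iota$, i.e.\ $q_G\circ\iota=\iota_r\circ q_\K$ for the regular-representation quotient maps $q_\K,q_G$.

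For injectivity of $\iota_r$ I would use the faithful conditional expectations $P_\K\colon C_r^*(\K)\to C_0(\goo)$ and $P_G\colon C_r^*(G)\to C_0(\goo)$ given by restriction to the unit space, which is common to $\K$ and $G$ by wideness (here I invoke the standard fact that restriction to units is a faithful conditional expectation on the reduced algebra of a Hausdorff \'{e}tale groupoid). Since $\iota_0(f)|_{\goo}=f|_{\goo}$, we have $P_G\circ\iota_r=P_\K$, and then $\iota_r(a)=0$ forces $P_\K(a^*a)=P_G\bigl(\iota_r(a)^*\iota_r(a)\bigr)=0$, whence $a=0$ by faithfulness of $P_\K$. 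Full injectivity under weak containment is then purely formal: if $C^*(\K)=C_r^*(\K)$ then $q_\K$ is an isomorphism, so $q_G\circ\iota=\iota_r\circ q_\K$ together with the injectivity of $\iota_r$ forces $\iota$ to be injective.

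I expect the reduced isometry to be the main obstacle: the coset bookkeeping inside $\ell^2(G_u)$ must be organized carefully to confirm that the block on $\ell^2(G_u^v)$ is genuinely a multiple of the regular representation of $\K_v^v$, and no larger. The remaining ingredients are either formal (the universal-property extension and the weak-containment deduction) or routine continuity checks (extension by zero on the open set $\K$, using that $G$ is Hausdorff).
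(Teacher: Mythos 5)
Your proposal is correct and is essentially the same argument the paper relies on---the paper omits details and defers to \cite{BNRSW}, whose proof of Theorem 3.1(a) follows exactly your four steps: extension by zero on the open wide subgroupoid, the $I$-norm/universal-property extension to the full algebras, the decomposition of $\ell^2(G_u)$ into free $\K_v^v$-orbits exhibiting $\lambda_u\circ\iota_0$ as a multiple of regular representations of the fibres, and faithfulness of the expectation onto $C_0(\goo)$. The only quibble is your attribution of the descent identity $q_G\circ\iota=\iota_r\circ q_\K$ to normality: that identity is automatic once the reduced-norm isometry is in hand (both sides are continuous and agree on the dense subalgebra $C_c(\K)$), so your argument never actually uses normality---a harmless point, since normality is a standing hypothesis of the reduced statement, but it means your proof in fact establishes the reduced embedding in slightly greater generality.
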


If $S$ is an inverse semigroup that is not cryptic, then $\G(Z)$ is an example of an open, wide, and normal subgroupoid of $\G(S)$ for which the containment $\G(Z) \subseteq \iso(G)^\circ$ may be proper. Notice that we have relaxed the requirement that $\K$ be amenable to guarantee injectivity of $\iota$. In light of Willett's \cite{willett} example of a non-amenable 
groupoid that nevertheless has weak containment, this seems like a useful distinction to make.

\begin{cor}
\label{cor:ZintoS}
	Let $S$ be an inverse semigroup with centralizer $Z$. Then there is a natural embedding $C_r^*(\G(Z)) \hookrightarrow C_r^*(\G(S))$.
\end{cor}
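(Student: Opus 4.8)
The plan is to obtain the embedding as a direct application of the reduced half of Theorem~\ref{thm:inclusion}, taking $G = \G(S)$ and $\K = \iota(\G(Z))$. Proposition~\ref{prop:Z} already supplies most of the hypotheses: it identifies $\G(Z)$ with an \emph{open} subgroupoid of $\G(S)$ sitting inside $\iso(\G(S))^\circ$, and Theorem~\ref{thm:clifford} tells us that $\G(Z)$ is a \emph{group bundle}. Wideness is immediate, since $Z = Z(E)$ contains every idempotent of $S$ and has the same semilattice $E(Z) = E(S)$; hence $Z$ acts on the same filter space $\fE$, giving $\G(Z)^{(0)} = \fE = \G(S)^{(0)}$, with $\iota$ the identity on units.

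The only hypothesis of Theorem~\ref{thm:inclusion} not yet in hand is that $\G(Z)$ is \emph{normal} in $\G(S)$, and this is the step I would treat carefully. I would deduce it from the algebraic fact that $Z = Z(E) = \Ker\mcmu$ is a normal subsemigroup of $S$: because $\mcmu$ is a congruence, $t^* Z t \subseteq Z$ for every $t \in S$. Passing to germs, take $\gamma = [t, F] \in \G(S)$ and $\kappa = [z, \beta_t(F)] \in \G(Z)$ with $z \in Z$, arranged so that $\gamma^{-1}\kappa\gamma$ is defined. A short germ computation, using that $z$ fixes its base filter $\beta_t(F)$ (so that $\beta_z(\beta_t(F)) = \beta_t(F)$, as in the proof of Theorem~\ref{thm:clifford}), collapses the product to
\[
	\gamma^{-1}\kappa\gamma = [t^*zt, F].
\]
Since $t^*zt \in t^* Z t \subseteq Z$, this germ lies in $\G(Z)$; hence $\gamma^{-1}\G(Z)\gamma \subseteq \G(Z)$ for all $\gamma$, which is exactly normality. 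I would stress that one cannot shortcut this using the ambient isotropy, since Example~\ref{exmp:diamond} shows that $\G(Z)$ may be properly contained in $\iso(\G(S))^\circ$, so the algebraic computation is genuinely needed.

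With $\G(Z)$ exhibited as an open, wide, normal group bundle in $\G(S)$, Theorem~\ref{thm:inclusion} delivers the injective homomorphism $\iota_r : C_r^*(\G(Z)) \to C_r^*(\G(S))$ (extending the extension-by-zero map $\iota_0$), which is the asserted natural embedding; note that the reduced conclusion requires only normality, not weak containment. The one point of genuine care is that Theorem~\ref{thm:inclusion} is stated for Hausdorff $G$, whereas $\G(S)$ need not be Hausdorff. I would address this by observing that the reduced conclusion is fiber-local: the reduced norm on either groupoid is computed through the regular representations on the Hilbert spaces $\ell^2(\G(S)_F)$ and $\ell^2(\G(Z)_F)$, which are available for any \'{e}tale groupoid, and the identity $\|\iota_0(f)\|_r = \|f\|_r$ is checked one fiber at a time using that $\G(Z)_F$ is a subgroup of the isotropy group $\G(S)_F^F$. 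Thus the main obstacle is not analytic but precisely the normality verification above; everything else is bookkeeping transported from Proposition~\ref{prop:Z} and Theorem~\ref{thm:inclusion}.
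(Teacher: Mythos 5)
Your proposal is correct and follows essentially the same route as the paper: the paper's proof likewise notes that $\G(Z)$ is wide and open (via Proposition \ref{prop:Z}), establishes normality by the identical conjugation computation $[s,F]^{-1}[z,\beta_s(F)][s,F] = [s^*zs,F]$ with $s^*zs \in Z$, and then applies Theorem \ref{thm:inclusion}. Your closing concern about the Hausdorff hypothesis of Theorem \ref{thm:inclusion} is a point the paper's own proof passes over in silence (it invokes the theorem directly, with no Hausdorffness assumed on $S$), so that remark goes beyond, rather than falls short of, the argument given in the paper.
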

\begin{proof}
	Clearly $\G(Z)$ is a wide subgroupoid of $\G(S)$, and we have already shown that it is open. Therefore, we just need to check that $\G(Z)$ is a normal
	subgroupoid of $\G(S)$. This follows from the fact that $Z$ is normal in $S$: if $z \in Z$, $s \in S$, and $F \in \fE$, we have
	\[
		[s, F]^{-1} [z, \beta_s(F)] [s, F] = [s^*zs, F].
	\]
	Since $s^*zs \in Z$, $[s^*zs,F] \in \G(Z)$. The result then follows immediately from Theorem \ref{thm:inclusion}.
\end{proof}

\begin{rem}
\label{rem:ZintoS}
Corollary \ref{cor:ZintoS} yields an embedding at the level of inverse semigroups as well. In \cite[Theorem 4.4.2]{PatersonBook}, Paterson proves that there is 
an isomorphism $\psi_r^S : C_r^*(S) \to C_r^*(\G(S))$, which is characterized on elements of $S$ by
\[
	\psi_r^S(s) = \chi_{\Theta(s, D_s^\beta)}.
\]
Of course there is an analogous isomorphism $\psi_r^Z : C_r^*(Z) \to C_r^*(\G(Z))$. If we let $\iota_r$ denote the embedding of $C_r^*(\G(Z))$ into $C_r^*(\G(S))$,
then the map $(\psi_r^S)^{-1} \circ \iota_r \circ \psi_r^Z : C_r^*(Z) \to C_r^*(S)$ is injective. In light of Proposition \ref{prop:Z} and Corollary \ref{cor:ZintoS}, it is easily
checked that
\[
	(\psi_r^S)^{-1} \circ \iota_r \circ \psi_r^Z (z) = z
\]
for all $z \in Z$. In other words, the inclusion of $Z$ into $S$ extends to an embedding $j_r : C_r^*(Z) \hookrightarrow C_r^*(S)$, and the diagram
\[
	\xymatrix{ C_r^*(Z) \ar[r]^(0.45){\psi_r^Z} \ar[d]_{j_r} & C_r^*(\G(Z)) \ar[d]^{\iota_r} \\
		C_r^*(S) \ar[r]^(0.45){\psi_r^S} & C_r^*(\G(S))
	}
\]
commutes.
\end{rem}

One of the main thrusts of this paper is to determine how inverse semigroup $C^*$-algebras fit into the main results of \cite{BNRSW}. In particular, one would
hope that an analog of \cite[Theorem 3.1(c)]{BNRSW} would hold with $\iso(G)^\circ$ replaced by a group bundle (such as $\G(Z)$) that is more intrinsic
to the inverse semigroup. This seems to be a bit too much to ask for at this point. Therefore, we instead single out the inverse semigroups for which 
$\G(Z) = \iso(\G(S))^\circ$, since the results of \cite{BNRSW} will immediately apply. Thanks to Proposition \ref{prop:cryptic}, we already know that this condition
will hold when $S$ is cryptic.

\begin{thm}
	Let $S$ be a Hausdorff, cryptic inverse semigroup, and let $Z$ denote the centralizer of $E(S)$. A homomorphism $\varphi : C_r^*(S) \to A$ is injective if 
	and only if $\varphi \vert_{C_r^*(Z)}$ is injective.
\end{thm}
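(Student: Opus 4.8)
The plan is to reduce the statement to \cite[Theorem 3.1(c)]{BNRSW} applied to the universal groupoid $\G(S)$, and then to transport the conclusion back to the inverse semigroup $C^*$-algebras through the isomorphisms recorded in Remark \ref{rem:ZintoS}. First I would observe that since $S$ is assumed to be Hausdorff, the universal groupoid $\G(S)$ is a locally compact Hausdorff \'{e}tale groupoid by the discussion at the end of Section \ref{sec:prelim}, so the standing hypotheses of \cite{BNRSW} are satisfied. Consequently, \cite[Theorem 3.1(c)]{BNRSW} applies to $G = \G(S)$: a $*$-homomorphism $\varphi' : C_r^*(\G(S)) \to A$ is injective if and only if its restriction to $C_r^*(\iso(\G(S))^\circ)$ is injective.

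The next step is to identify the distinguished subalgebra. Because $S$ is cryptic, Proposition \ref{prop:cryptic} gives $\G(Z) = \iso(\G(S))^\circ$, so the subalgebra appearing in \cite{BNRSW} is precisely $C_r^*(\G(Z))$. It then remains to move everything across the isomorphisms $\psi_r^S : C_r^*(S) \to C_r^*(\G(S))$ and $\psi_r^Z : C_r^*(Z) \to C_r^*(\G(Z))$. Given $\varphi : C_r^*(S) \to A$, I would set $\varphi' = \varphi \circ (\psi_r^S)^{-1}$; since $\psi_r^S$ is an isomorphism, $\varphi$ is injective if and only if $\varphi'$ is. The commuting diagram of Remark \ref{rem:ZintoS} yields $(\psi_r^S)^{-1} \circ \iota_r = j_r \circ (\psi_r^Z)^{-1}$, so that $\varphi' \circ \iota_r = \varphi \circ j_r \circ (\psi_r^Z)^{-1}$. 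As $\psi_r^Z$ is an isomorphism, this composite is injective exactly when $\varphi \circ j_r = \varphi\vert_{C_r^*(Z)}$ is injective. Chaining these equivalences with the conclusion of \cite{BNRSW} gives the theorem.

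I do not expect a genuine analytic obstacle here: the substantive content is already carried by Proposition \ref{prop:cryptic} (which supplies the equality $\G(Z) = \iso(\G(S))^\circ$) and by the uniqueness theorem of \cite{BNRSW}. The only point requiring care is the bookkeeping of the identifications, namely verifying that under $\psi_r^S$ the \cite{BNRSW} subalgebra $C_r^*(\iso(\G(S))^\circ)$ corresponds exactly to the image $j_r(C_r^*(Z))$, and that the restriction $\varphi'\vert_{C_r^*(\G(Z))}$ matches $\varphi\vert_{C_r^*(Z)}$ through the diagram. Since Remark \ref{rem:ZintoS} establishes precisely this compatibility, the argument is essentially a diagram chase layered on top of the two cited results.
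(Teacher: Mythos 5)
Your proof is correct and follows essentially the same route as the paper's: Proposition \ref{prop:cryptic} supplies $\G(Z) = \iso(\G(S))^\circ$ under the cryptic hypothesis, \cite[Theorem 3.1(c)]{BNRSW} applies to the Hausdorff groupoid $\G(S)$, and Remark \ref{rem:ZintoS} transports the conclusion to $C_r^*(S)$ and $C_r^*(Z)$. Your write-up merely makes explicit the diagram chase that the paper compresses into ``it suffices to work with the universal groupoids.''
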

\begin{proof}
	In light of Remark \ref{rem:ZintoS}, it suffices to work with the universal groupoids of $S$ and $Z$. We have already shown in Proposition \ref{prop:cryptic} 
	that if $S$ is cryptic, then $\G(Z) = \iso(\G)^\circ$. It follows from Theorem 3.1(c) of \cite{BNRSW} that a homomorphism $\varphi : C_r^*(\G(S)) \to A$ is injective 
	if and only if it is injective on $C_r^*(\G(Z)) \subseteq C_r^*(\G(S))$.
\end{proof}


\section{Actions of inverse semigroups}
\label{sec:actions}

In this section, we analyze the structure of the groupoid of germs associated to an inverse semigroup action. More specifically, we are interested in the relationship
between the kernel of an action and the isotropy bundle of the corresponding groupoid. We present our uniqueness theorems at the end of this section, with an
eye toward the standard actions of an inverse semigroup $S$ on its spectra $\fE$ and $\tfE$.

Fix an action $\alpha$ of an inverse semigroup $S$ on a locally compact Hausdorff space $X$ and let $\G_{\alpha}$ denote the groupoid of germs of 
$\alpha$. The set
\[
	J = \{st^* \in S : \alpha(s) = \alpha(t)\}
\] 
is a normal inverse subsemigroup of $S$ called the \textit{kernel} of $\alpha$. In the case that $\alpha$ is the universal action of $S$ on $\fE$, then $J = Z(E)$. 
Recall that $\theta$ is defined to be the restriction of the universal action to the space of tight filters. Therefore $Z(E)$ is a subset of the kernel of $\theta$. The 
results in this section extend those of the previous section in that we relate the subgroupoid of $\G_{\alpha}$ induced by the kernel,
\[
	\G_{\alpha}(J) = \bigl\{ [s,F]: s\in J, F \in D_s \bigr\},
\]
to the isotropy bundle of $\G_\alpha$.

\begin{prop} 
\label{prop:galpha}
	Let $\alpha:S \rightarrow \mathcal{I}(X)$ be an action of $S$ on a locally compact Hausdorff space $X$ with kernel $J$. Then $\G_{\alpha}(J)$ is an open subset 
	of $\iso(\G_{\alpha})$. Moreover, if the sets $D_e^\alpha$ for $e \in E(S)$ form a base for the topology on $X$, then $\G_{\alpha}(J) = \iso(\G_{\alpha})^{\circ}$.
\end{prop}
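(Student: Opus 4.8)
The plan is to begin by recording a clean algebraic description of the kernel: an element $s \in S$ lies in $J$ if and only if $\alpha_s$ is a restriction of the identity map, i.e.\ $\alpha_s(x) = x$ for every $x \in D_{s^*s}^\alpha$. One direction is immediate, since if $s = ab^*$ with $\alpha(a) = \alpha(b)$ then $\alpha_s = \alpha_a \alpha_{b^*} = \alpha_b \alpha_{b^*} = \alpha_{bb^*}$, which is the identity on its domain; conversely, if $\alpha_s$ is the identity on $D_{s^*s}^\alpha$ then $\alpha_s = \alpha_{s^*s}$, so writing $s = s(s^*s)^*$ exhibits $s \in J$. This reformulation is precisely what will translate the geometric condition ``$[s,x]$ is an isotropy element'' into algebraic membership in $J$.

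With this in hand, the first assertion is straightforward. If $s \in J$ and $x \in D_{s^*s}^\alpha$, then $\alpha_s(x) = x$, so $r([s,x]) = [ss^*, \alpha_s(x)] = [s^*s, x] = d([s,x])$, whence $[s,x] \in \iso(\G_\alpha)$ and $\G_\alpha(J) \subseteq \iso(\G_\alpha)$. For openness I would note that any point of $\G_\alpha(J)$ has a representative with $s \in J$, and the basic neighborhood $\Theta(s, D_{s^*s}^\alpha)$ of $[s,x]$ then consists entirely of germs $[s,y]$ with the same $s \in J$, so it is contained in $\G_\alpha(J)$. Thus $\G_\alpha(J)$ is open in $\G_\alpha$, and a fortiori open in $\iso(\G_\alpha)$.

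It follows that $\G_\alpha(J) \subseteq \iso(\G_\alpha)^\circ$, so for the second assertion only the reverse inclusion remains, and this is where the base hypothesis on the $D_e^\alpha$ does the work. Given $[s,x] \in \iso(\G_\alpha)^\circ$, I would pass to a representative $s$ and choose a basic open set $\Theta(s, W) \subseteq \iso(\G_\alpha)$ with $x \in W \subseteq D_{s^*s}^\alpha$. Since the sets $D_e^\alpha$ form a base, there is an idempotent $e$ with $x \in D_e^\alpha \subseteq W$, whence $\Theta(s, D_e^\alpha) \subseteq \iso(\G_\alpha)$; this says precisely that $\alpha_s(y) = y$ for all $y \in D_e^\alpha$. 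The correct representative is $t = se$: one checks $[s,x] = [se,x]$ using the idempotent $e$, and that $\alpha_{se}$ has domain $D_e^\alpha \cap D_{s^*s}^\alpha = D_e^\alpha$, on which $\alpha_{se}(y) = \alpha_s(\alpha_e(y)) = \alpha_s(y) = y$. By the characterization of $J$ from the first step, $se \in J$, so $[s,x] = [se,x] \in \G_\alpha(J)$, giving $\iso(\G_\alpha)^\circ \subseteq \G_\alpha(J)$.

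The main obstacle is this last inclusion: knowing merely that $[s,x]$ is an interior isotropy point yields a priori only that $\alpha_s$ fixes a \emph{neighborhood} of $x$, and one must promote this to an honest algebraic witness $se \in J$. The base hypothesis is exactly what guarantees that the neighborhood can be taken of the form $D_e^\alpha$, so that the localized germ $[se, x]$ admits a representative whose associated partial map is the identity on its entire domain. Without a base of this form, the interior of the isotropy bundle could contain germs that are ``locally trivial'' in a manner not detected by any single element of $J$, and the equality would fail.
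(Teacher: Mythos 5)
Your proof is correct and follows essentially the same route as the paper's: openness via the basic neighborhoods $\Theta(s, D^\alpha_{s^*s})$ for $s \in J$, containment in $\iso(\G_\alpha)$ from the fact that elements of $J$ act as restrictions of the identity, and the reverse inclusion by shrinking an interior isotropy neighborhood to some $D_e^\alpha$ and passing to the representative $se \in J$. Your explicit lemma characterizing $J$ as the elements acting as identity restrictions, and your use of $D_e^\alpha \subseteq W \subseteq D^\alpha_{s^*s}$ in place of the paper's normalization $e \leq s^*s$, are only cosmetic differences.
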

\begin{proof}
	Fix $z = s t^*$ in $J$. Then $\alpha(z)$ is idempotent and fixes every element of $D^\alpha_{z^*z} = D^\alpha_{zz^*}$. It follows that 
	$\Theta(z,D^\alpha_{z^*z}) \subseteq \G_{\alpha}(J)$. Thus $\G_{\alpha}(J)$ is open. Now put $e = (zz^*)(z^*z)$. Then for any $x \in D^\alpha_{z^*z}$, we 
	have $x \in D^\alpha_e$. Furthermore, $z^*ze = zz^*e$. Thus $d[z,x] = r[z,x]$, so $\mathcal{G}_{\alpha}(J)$ is contained in $\iso (\G_\alpha)$.

	Now suppose that the sets $D_e$ for $e \in E(S)$ form a base for the topology on $X$. Let $[s, x] \in \iso(\G_\alpha)^{\circ}$. Then there exists 
	$e \in E(S)$ such that $[s, x] \in \Theta(s, D^\alpha_e) \subseteq \iso(\G_\alpha)$. We may assume $e \leq s^* s$, since $x \in D^\alpha_{s^*s}$ and for $e \leq f$, 
	$D^\alpha_e \subseteq D^\alpha_f$. Then $x \in D^\alpha_e$ and hence $[se, x] = [s, x]$. Notice that $\alpha_{se} = \alpha_{e}$ and hence $se \in J$. Thus 
	$[s, x] \in \G_{\alpha}(J)$.
\end{proof}

\begin{exmp} 
	Given a directed graph $\Lambda$, consider the action $\theta$ of the graph inverse semigroup $P_{\Lambda}$ on the set of tight filters of idempotents.
	The set $P_{\Lambda}$ contains $0$ and all words of the form  $xy^{*}$ where $x$ and $y$ are paths in $\Lambda$ with the same source. Multiplication is 
	defined by 
	\[
		xy^{*} \cdot uv^{*} =
			\begin{cases}
				 xzv^{*} & \mbox{if $u=yz$ for some path $z$}\\
				 x \left( vz \right)^{*} & \mbox{if $y=uz$ for some path $z$}\\
				 0 & \mbox{otherwise.}\\
			\end{cases}
	\]

	Idempotents in $P_{\Lambda}$ are of the form $xx^*$ where $x$ is a finite path in $\Lambda$, and $xx^* \leq yy^*$ if and only if $x$ extends $y$. Moreover, the semilattice $E = E(P_{\Lambda})$ is \emph{unambiguous at 0}: the product of two idempotents is nonzero only if the idempotents are comparable. Thus the filters on $E$ are linearly ordered and can be identified with paths in $\Lambda$. The tight filters are the set of all infinite paths together with the finite paths that end at singular vertices (see \cite{PatersonGraph}). 

If $\Lambda$ is row-finite and does not contain sinks, then there are no singular vertices and the tight filters can be identified with infinite paths. By \cite[Proposition 3(ii)]{PatersonGraph}, sets of the form $D^\theta_e$ for $e \in E(S)$ form a base for the topology on $\tfE$. Thus for graphs that are row-finite and do not contain sinks, $\G_{\theta}(J) = \iso(\G_{\theta})^{\circ}$. In this case, the Cartan subalgebra for which the generalized Cuntz-Krieger uniqueness theorem of \cite{BNRSW} holds is generated by the kernel $J$. In Theorem \ref{thm:kerneluniqueness} below, we give a uniqueness theorem that holds for the subalgebra of $C_r^*(\G_\alpha)$ generated by the kernel of an action satisfying the hypotheses of the above proposition.
\end{exmp}

Note that for any inverse semigroup $S$, $Z(E)$ is contained in the kernel $J$ of the tight action $\theta$. Equality holds if and only if $\theta$ is idempotent 
separating, which is equivalent to the statement that the map $e \mapsto D_e$ is injective. It turns out that we can place restrictions on the semilattice that force
$\theta$ to be idempotent separating. 

A semilattice $E$ with zero is said to be \emph{$0$-disjunctive} if for all $0 < e < f$, there exists $0 < e' < f$ such that $ee' = 0$. Recall that Exel \cite{ExelBig} showed 
that the ultrafilters form a dense subset of the tight filters. Given a nonzero idempotent $e$ of $S$, let $U_e$ be the set of ultrafilters containing $e$. Then 
$D_e^\theta = \overline{U}_e$ for each $e$. Lawson showed that for $0$-disjunctive inverse semigroups, $e \mapsto U_e$ is injective, and that the inverse 
semigroup of a graph is $0$-disjunctive if and only if no vertex has in-degree 1 \cite[Lemma 2.8, Lemma 2.15]{LawsonCompactable}. 

\begin{lem} 
	Let S be an inverse semigroup with semilattice of idempotents $E$. If $E$ is $0$-disjunctive, then the homomorphism $\theta: S \to \mathcal{I}(\tfE)$ is 
	idempotent separating.
\end{lem}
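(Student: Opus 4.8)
The plan is to unwind the phrase \emph{idempotent separating} and then reduce everything to Lawson's injectivity result by passing through the ultrafilters. First I would recall that the congruence induced by $\theta$ is idempotent separating precisely when $\theta$ is injective on $E(S)$, and that, as already noted above, this amounts to injectivity of the map $e \mapsto D^\theta_e$: since each $\theta_e$ is just the identity map on its domain $D^\theta_e$, two idempotents $e, f$ satisfy $\theta_e = \theta_f$ if and only if $D^\theta_e = D^\theta_f$. So the statement reduces to showing that $e \mapsto D^\theta_e$ is injective on $E(S)$ whenever $E$ is $0$-disjunctive.

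Next I would exploit the relationship between $D^\theta_e$ and the set $U_e$ of ultrafilters containing $e$. Because every ultrafilter is a tight filter and $D^\theta_e = \{ F \in \tfE : e \in F \}$, we have exactly $U_e = D^\theta_e \cap \{\text{ultrafilters}\}$. Consequently, if $D^\theta_e = D^\theta_f$, then intersecting both sides with the set of ultrafilters yields $U_e = U_f$. The $0$-disjunctive hypothesis then enters through Lawson's result, recalled above, that for a $0$-disjunctive semilattice the map $e \mapsto U_e$ is injective on nonzero idempotents; hence for nonzero $e, f$ the equality $U_e = U_f$ forces $e = f$.

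Finally I would dispose of the zero idempotent separately: $D^\theta_0 = \emptyset$, while any nonzero idempotent $e$ lies in at least one ultrafilter (extend the principal filter $\up e$ to an ultrafilter via Zorn's lemma), so $D^\theta_e \neq \emptyset$ for $e \neq 0$. Thus $0$ cannot share a value of $D^\theta$ with any nonzero idempotent, and together with the previous paragraph this shows $e \mapsto D^\theta_e$ is injective on all of $E(S)$, giving that $\theta$ is idempotent separating.

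The argument is short because the cited facts carry most of the weight; the only point requiring genuine care is the \emph{direction} of the passage between $U_e$ and $D^\theta_e$. Lawson supplies injectivity of $e \mapsto U_e$, so I must recover $U_e$ from $D^\theta_e$ by intersecting with the ultrafilters, rather than the reverse operation of taking closures---the latter, via $D^\theta_e = \overline{U}_e$, would only give the useless implication $U_e = U_f \Rightarrow D^\theta_e = D^\theta_f$. I expect getting this direction right, and confirming $U_e = D^\theta_e \cap \{\text{ultrafilters}\}$ holds on the nose, to be the main (if modest) obstacle.
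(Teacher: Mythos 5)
Your proof is correct and takes essentially the same route as the paper's: both reduce $\theta(e) = \theta(f)$ to $D^\theta_e = D^\theta_f$, pass to ultrafilters to conclude $U_e = U_f$ (the paper does this by chasing an ultrafilter $F \in U_e$ through $D^\theta_e = D^\theta_f$, which is exactly your intersection argument), and then invoke Lawson's injectivity of $e \mapsto U_e$ for $0$-disjunctive semilattices. Your separate handling of the zero idempotent is a small extra bit of care that the paper leaves implicit, but it does not constitute a different approach.
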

\begin{proof}  
	Since $E$ is $0$-disjunctive, the map $e \mapsto U_e$ is injective. Suppose for idempotents $e$ and $f$ that we have $\theta(e) = \theta(f)$. Then 
	$D_e^\theta = D_f^\theta$. Let $F \in U_e$. Then $F \in D_e^\theta = D_f^\theta$ and so $F$ contains $f$. Thus $U_e \subseteq U_f$. By symmetry, $U_e = U_f$. 
	Since $E$ is $0$-disjunctive, $e = f$.
\end{proof} 
 
In light of the previous lemma, we focus on the tight action $\theta$ of an inverse semigroup $S$ having a $0$-disjunctive semilattice $E$. Recall that the groupoid of
germs $\G_\theta$ is the tight groupoid $\Gt(S)$; we will use these two notations interchangeably.

\begin{prop}
\label{prop:stabilizers} 
	Let S be an inverse semigroup with semilattice of idempotents $E$, and suppose that $E$ is $0$-disjunctive. Let $\theta$ be the homomorphism 
	$\theta: S \to \mathcal{I}(\tfE)$. Then $\G_{\theta}(Z) = \iso(\G_\theta)^\circ$ 
\end{prop}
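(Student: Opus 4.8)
The plan is to combine the kernel description from Proposition~\ref{prop:galpha} with a density-and-continuity argument of exactly the flavour used to prove Proposition~\ref{prop:cryptic}. First I would record the reduction furnished by $0$-disjunctivity: the preceding lemma says $\theta$ is idempotent separating, and this is precisely the condition guaranteeing that the kernel $J$ of $\theta$ coincides with $Z = Z(E)$. Hence $\G_\theta(Z) = \G_\theta(J)$, and the first assertion of Proposition~\ref{prop:galpha} shows that this set is open in $\G_\theta$ and contained in $\iso(\G_\theta)$; being open and contained in the isotropy bundle, it lies in the interior, which gives the easy inclusion $\G_\theta(Z) \subseteq \iso(\G_\theta)^\circ$. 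I want to stress at the outset that one cannot simply invoke the second assertion of Proposition~\ref{prop:galpha}: the sets $D_e^\theta$ do \emph{not} form a base for $\tfE$ as soon as there are tight filters that are not ultrafilters. Indeed, a non-ultrafilter tight filter $F$ admits an $f \le e_0 \in F$ with $f \notin F$ but $ef \neq 0$ for every $e \in F$, so no $D_e^\theta$ can be contained in $N^{e_0}_{f} \cap \tfE$. Thus the reverse inclusion genuinely requires the continuity machinery.

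For the reverse inclusion I would show that $\G_\theta(Z)$ and $\iso(\G_\theta)^\circ$ agree fibrewise and then propagate this agreement off a dense set. Both are open subgroupoids of $\G_\theta$ that are group bundles, so both are continuously varying, and their fibre maps $F \mapsto \G_\theta(Z)_F$ and $F \mapsto \iso(\G_\theta)^\circ_F$ are continuous into $\mathscr{C}(\G_\theta)$ with the Fell topology. Since $\tfE$ is by definition the closure of the ultrafilters, the ultrafilters are dense, and it therefore suffices to prove $\G_\theta(Z)_F = \iso(\G_\theta)^\circ_F$ for every ultrafilter $F$.

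The heart of the matter is this fibrewise agreement at ultrafilters, and here I would give a direct germ computation. Fix an ultrafilter $F$ and $[s,F] \in \iso(\G_\theta)^\circ$. Choose a basic neighbourhood $\Theta(s, U) \subseteq \iso(\G_\theta)$ with $F \in U$, and shrink $U$ to a set $N^{e_0}_{f_1, \dots, f_n} \cap \tfE$ with $e_0 \le s^*s$, $e_0 \in F$, and $f_i \notin F$. Because $F$ is an ultrafilter, for each $i$ there is $h_i \in F$ with $f_i h_i = 0$; setting $e = e_0 h_1 \cdots h_n \in F$ gives $e \le s^*s$ and $e f_i = 0$, whence $D_e^\theta \subseteq N^{e_0}_{f_1, \dots, f_n} \cap \tfE \subseteq U$. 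As $\Theta(s, U) \subseteq \iso(\G_\theta)$, the map $\theta_s$ fixes every tight filter in $U$, in particular every filter in $D_e^\theta$; since $(se)^*(se) = e$, this means $\theta_{se} = \theta_s|_{D_e^\theta}$ is a partial identity, so $se$ lies in the kernel $J = Z$. Because $e \in F$ we have $[s,F] = [se, F] \in \G_\theta(Z)$, giving $\iso(\G_\theta)^\circ_F \subseteq \G_\theta(Z)_F$, and the reverse containment is the easy inclusion already in hand. Note that this step uses only that $F$ is an ultrafilter; the role of $0$-disjunctivity has been spent entirely on the identity $J = Z$.

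Finally I would invoke continuity: two continuous maps into $\mathscr{C}(\G_\theta)$ agreeing on the dense set of ultrafilters agree on all of $\tfE$, exactly as in Proposition~\ref{prop:cryptic}, so $\G_\theta(Z)_F = \iso(\G_\theta)^\circ_F$ for every $F$ and the two open subgroupoids coincide. I expect the main obstacle to lie in this last propagation step rather than in the fibrewise computation: one must ensure that the fibres, regarded as closed subsets of $\G_\theta$, vary continuously in the Fell topology and that agreement on a dense subset forces global agreement. This is the same technical point underlying Proposition~\ref{prop:cryptic}, and I would dispatch it identically, transferring Fell convergence from each open subgroupoid into $\mathscr{C}(\G_\theta)$ in the manner of the earlier characterization of when $\iso(G)$ is open.
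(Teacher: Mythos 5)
Your proof is correct, and its skeleton matches the paper's exactly: the easy containment $\G_\theta(Z) \subseteq \iso(\G_\theta)^\circ$, a germ computation establishing the reverse containment fibrewise at ultrafilters (ending with $\theta_{se} = \theta_e$, hence $se \in J = Z$ and $[s,F] = [se,F]$), and propagation to all tight filters via density of ultrafilters and the continuously varying stabilizers of the two open group bundles. The one step you handle differently is the neighbourhood-shrinking: the paper produces the idempotent $e \in F$ with $D_e^\theta \subseteq N^{x}_{x_1,\dots,x_n}$ by citing Steinberg's result that, for $0$-disjunctive $E$, the sets $U_e$ form a base for the space of ultrafilters, so the paper spends the $0$-disjunctive hypothesis twice. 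You instead argue directly from maximality of ultrafilters --- for $f_i \notin F$ there is $h_i \in F$ with $f_i h_i = 0$, since otherwise $\{g : g \geq f_i h \text{ for some } h \in F\}$ would be a filter properly containing $F$ --- and take $e = e_0 h_1 \cdots h_n$. This is a genuine, if modest, refinement: it makes the fibrewise step self-contained and valid in any semilattice with zero, so that $0$-disjunctivity enters only through the identity $J = Z$ furnished by the idempotent-separation lemma; it also makes transparent exactly where each hypothesis is used. Your side remark that the $D_e^\theta$ need not form a base for $\tfE$ when non-ultrafilter tight filters exist (so that Proposition \ref{prop:galpha} cannot simply be invoked) is also correct, and it explains why both you and the paper must run the density-and-continuity argument rather than quoting the second assertion of that proposition. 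Both proofs are equally terse about the final Fell-topology transfer into $\mathscr{C}(\G_\theta)$, and your appeal to the earlier characterization of when $\iso(G)$ is open is the right way to discharge it.
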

\begin{proof} 
	We will show that $\G_{\theta}(Z)_F = \iso(\G_\theta)_{F}^{\circ}$ for every tight filter $F \in \fE$. The containment 
	$\G_{\theta}(Z)_F \subseteq \iso(\G_\theta)_{F}^{\circ}$ holds for any filter $F$. The proof is similar to the proof of Proposition \ref{prop:Z}. For the other 
	containment, we first assume that $F$ is an ultrafilter. Suppose that $[s,F] \in \iso(\G_\theta)_{F}^{\circ}$. Then there exist idempotents $x, x_1, \dots, x_n$
	such that 
	\[ 
		[s,F] \in \Theta(s, N^{x}_{x_1,x_2, \dots, x_n}) \subseteq \iso(\G_\theta)^{\circ} 
	\]
	Since $E$ is $0$-disjunctive, the sets $U_e$ form a base for the space of ultrafilters (see \cite[Proposition 5.10]{SteinbergSimple}). Therefore, there is an 
	idempotent $e \in F$ such that $U_e$ is contained in the set of ultrafilters in $N^{x}_{x_1,x_2, \dots, x_n}$. Then 
	$\overline{U}_e = D_{e}^\theta \subseteq N^{x}_{x_1,x_2, \dots, x_n}$.  Thus $[s,F] \in \Theta(s,D_e^\theta) \subseteq \iso(\G_\theta)$. We may assume that 
	$e \leq s^*s$ (possibly by replacing $e$ with $e s^*s$).

	Now $\theta(se)$ is a partial bijection with domain $D_{ (se)^* se}^\theta = D_{e}^\theta$. Also, since $\Theta(s,D_{e}^\theta) \subseteq \iso(\G_\theta)$, 
	$\theta(se)$ fixes every element of $D_{e}^\theta$. Thus $\theta(se) = \theta(e)$ and $se \in J_{\theta}$, where $J_{\theta}$ is the kernel of $\theta$. Since $\theta$ is idempotent separating, $se \in Z$. Finally we have $[s,F] = [se,F] \in \G_{\theta}(Z)_F$.

	We have verified that $\G_{\theta}(Z)_F = \iso(\G_\theta)_{F}^{\circ}$ for any ultrafilter $F$. As in the proof of Proposition \ref{prop:cryptic}, $\G_{\theta}(Z)$ 
	and $\iso(\G_\theta)^{\circ}$ are both open in $\G_\theta$ and thus have continuously varying stabilizers. Since the set of ultrafilters is dense in $\tfE$ it follows that 
	$\G_{\theta}(Z)_F = \iso(\G_\theta)_{F}^{\circ}$ for each tight filter $F$.
\end{proof}

\begin{cor} 
	If $E$ is $0$-disjunctive and fundamental then $\Gt(S)$ is essentially principal.
\end{cor}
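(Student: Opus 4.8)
The plan is to combine the fundamentality hypothesis with Proposition \ref{prop:stabilizers}, which already does all of the real work. First I would unwind what ``fundamental'' buys us at the level of the centralizer. Since $S$ is fundamental, the $\mu$-relation is the identity, so each $\mu$-class $Z_e$ collapses to $\{e\}$, and therefore the centralizer $Z = Z(E) = \bigcup_{e \in E} Z_e$ is exactly $E$. Consequently the kernel subgroupoid is
\[
	\G_\theta(Z) = \G_\theta(E) = \bigl\{ [e, F] : e \in E,\ F \in D^\theta_e \bigr\}.
\]

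The key observation I would make next is that $\G_\theta(E)$ is nothing more than the unit space of $\Gt(S)$. For an idempotent $e$, the action $\theta_e$ is a partial identity, so it fixes every $F \in D^\theta_e$; hence the germ $[e,F]$ satisfies $[e,F]^{-1} = [e, \theta_e(F)] = [e,F]$ and $r([e,F]) = [ee, \theta_e(F)] = [e,F] = [e^*e, F] = d([e,F])$, so each such germ is a unit. Conversely, any two germs $[e, F]$ and $[e', F]$ with $e, e' \in F$ coincide, since they have the common restriction $ee' \in F$; thus the germs of idempotents are precisely the units of $\Gt(S)$. Putting these together gives $\G_\theta(Z) = \G_\theta(E) = \Gt(S)^{\scriptscriptstyle{(0)}}$.

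Finally, because $E$ is $0$-disjunctive, Proposition \ref{prop:stabilizers} yields $\G_\theta(Z) = \iso(\G_\theta)^\circ$. Substituting the identification from the previous paragraph gives
\[
	\iso(\Gt(S))^\circ = \Gt(S)^{\scriptscriptstyle{(0)}},
\]
which is exactly the definition of $\Gt(S)$ being essentially principal.

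I do not expect a genuine obstacle here: the argument is essentially bookkeeping once Proposition \ref{prop:stabilizers} is available. The fundamentality hypothesis serves only to collapse $Z$ down to $E$, and hence $\G_\theta(Z)$ down to the unit space, while the $0$-disjunctivity hypothesis is precisely what powers Proposition \ref{prop:stabilizers}. The one step requiring (routine) verification is that the germs of idempotents exhaust the units, and even that is a short computation with the germ relation.
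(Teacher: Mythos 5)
Your proof is correct and follows exactly the route the paper intends: the corollary is stated without proof as an immediate consequence of Proposition \ref{prop:stabilizers}, with fundamentality collapsing $Z$ to $E$ and hence $\G_\theta(Z)$ to the unit space, so that $\iso(\Gt(S))^\circ = \Gt(S)^{\scriptscriptstyle{(0)}}$. Your verification that the germs of idempotents are precisely the units is the same routine bookkeeping the paper leaves implicit.
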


Notice that the above corollary implies \cite[Corollary 5.11]{SteinbergSimple}. Moreover, we have managed to remove the hypothesis that $\Gt(S)$ is Hausdorff.

Under certain conditions, the results of this section yield a uniqueness theorem for the groupoid of germs of an inverse semigroup action. Specifically, we obtain a uniqueness theorem for the tight groupoid of an inverse semigroup.

\begin{thm}\label{thm:kerneluniqueness} 
	Let $S$ be a Hausdorff inverse semigroup, and let $\alpha:S \rightarrow \mathcal{I}(X)$ be an action of $S$ on a locally compact Hausdorff space $X$ with kernel $J$. Suppose the collection
	\[
		\{D_e^\alpha : e \in E(S)\}
	\]
	forms a base for the topology on $X$, and each $D_e^\alpha$ is clopen in $X$. Then a homomorphism $\varphi : C_r^*(\G_\alpha) \to A$ is injective if and only 
	if $\varphi \vert_{C_r^*(\G_\alpha(J))}$ is injective.
\end{thm}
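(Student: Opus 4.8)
The plan is to reduce the theorem to the generalized uniqueness machinery of \cite{BNRSW} by showing that $\G_\alpha(J)$ coincides with $\iso(\G_\alpha)^\circ$ and that this subgroupoid satisfies the hypotheses needed to apply \cite[Theorem 3.1(c)]{BNRSW}. The first step is to invoke Proposition \ref{prop:galpha}: the hypothesis that $\{D_e^\alpha : e \in E(S)\}$ forms a base for the topology on $X$ gives exactly $\G_\alpha(J) = \iso(\G_\alpha)^\circ$. This identifies the distinguished subalgebra $C_r^*(\G_\alpha(J))$ with $C_r^*(\iso(\G_\alpha)^\circ)$, which is precisely the subalgebra appearing in the uniqueness theorem of \cite{BNRSW}.

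The second step is to verify that $\G_\alpha$ is a \emph{Hausdorff} \'{e}tale groupoid, since the results of \cite{BNRSW} are stated in that setting. This is where the two standing hypotheses on $S$ and the domains are used: the groupoid of germs $\G_\alpha$ is Hausdorff whenever $S$ is Hausdorff and each $D_e^\alpha$ is clopen in $X$, by \cite[Proposition 2.3]{SteinbergSimple} as recorded in the preliminaries. The \'{e}tale property and local compactness hold automatically for a groupoid of germs of an action on a locally compact Hausdorff space. With these structural facts in place, $\G_\alpha$ meets the blanket assumptions of \cite{BNRSW}.

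Having established both that $\G_\alpha$ is a locally compact Hausdorff \'{e}tale groupoid and that $C_r^*(\G_\alpha(J)) = C_r^*(\iso(\G_\alpha)^\circ)$ sits inside $C_r^*(\G_\alpha)$ as the canonical subalgebra, the final step is a direct appeal to \cite[Theorem 3.1(c)]{BNRSW}. That theorem asserts that a homomorphism $\varphi : C_r^*(\G_\alpha) \to A$ is injective if and only if its restriction to $C_r^*(\iso(\G_\alpha)^\circ)$ is injective. Rewriting $\iso(\G_\alpha)^\circ$ as $\G_\alpha(J)$ via the first step yields precisely the stated conclusion.

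The one point requiring genuine care is confirming that every hypothesis of \cite[Theorem 3.1(c)]{BNRSW} is met; in practice this is the main obstacle, since that theorem may implicitly assume second countability or Hausdorffness of $\G_\alpha$, and one must check these follow from the assumptions on $S$, $X$, and the domains. Once the Hausdorffness is secured through the clopen-domains condition and the base condition is fed into Proposition \ref{prop:galpha}, the argument is entirely a matter of matching hypotheses and quoting the external uniqueness theorem, so no substantial new computation is needed.
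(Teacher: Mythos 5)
Your proposal is correct and follows essentially the same argument as the paper: Hausdorffness of $\G_\alpha$ via \cite[Proposition 2.3]{SteinbergSimple} from the Hausdorffness of $S$ and the clopen domains, the identification $\G_\alpha(J) = \iso(\G_\alpha)^\circ$ from Proposition \ref{prop:galpha} using the base hypothesis, and then a direct appeal to \cite[Theorem 3.1]{BNRSW}. The paper's proof is exactly this three-step reduction, so no further comparison is needed.
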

\begin{proof}
	The assumption that $S$ is Hausdorff and the sets $D_e^\alpha$ are clopen implies that $\G_\alpha$ is Hausdorff by \cite[Proposition 2.3]{SteinbergSimple}. 
	By Proposition \ref{prop:galpha}, we have 
	$\G_\alpha(J) = \iso(\G_\alpha)^\circ$, so the result follows from Theorem 3.1 of \cite{BNRSW} with $H = \G_\alpha(J)$.
\end{proof}

\begin{thm}
\label{thm:tightuniqueness}
	Let S be a Hausdorff inverse semigroup with semilattice of idempotents $E$ and centralizer $Z$, and suppose $E$ is $0$-disjunctive. Let 
	$\theta: S \to \mathcal{I}(\tfE)$
	denote the tight action of $S$, and let $\G_\theta$ and $\G_\theta(Z)$ denote the tight groupoids of $S$ and $Z$, respectively. Then a homomorphism
	$\varphi : C_r^*(\G_\theta) \to A$ is injective if and only if $\varphi \vert_{C_r^*(\G_\theta(Z))}$ is injective.
\end{thm}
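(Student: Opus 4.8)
The plan is to reduce the assertion to the general reduced uniqueness theorem \cite[Theorem 3.1(c)]{BNRSW}, in the same spirit as the proof of Theorem \ref{thm:kerneluniqueness}, with $C_r^*(\G_\theta(Z))$ taking the role of the distinguished subalgebra $C_r^*(\iso(\G_\theta)^\circ)$. To apply that theorem I need two ingredients: that $\G_\theta$ is a locally compact Hausdorff \'{e}tale groupoid, and that its interior-of-isotropy subgroupoid is exactly $\G_\theta(Z)$. The second is precisely Proposition \ref{prop:stabilizers}, which is available because $E$ is $0$-disjunctive. It is worth stressing why I route through Proposition \ref{prop:stabilizers} instead of simply quoting Theorem \ref{thm:kerneluniqueness}: the latter requires the sets $D_e^\theta$ to form a base for $\tfE$, and this genuinely fails in general, since finite intersections of the $D_e^\theta$ are again of the form $D_f^\theta$ and so cannot produce the ``exclusion'' basic open sets $N^{e}_{f_1,\dots,f_n} \cap \tfE$. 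Proposition \ref{prop:stabilizers} sidesteps this by establishing $\G_\theta(Z) = \iso(\G_\theta)^\circ$ through density of the ultrafilters together with continuity of stabilizers.

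The first step I would carry out is to confirm that $\G_\theta$ is Hausdorff. Since $\tfE$ is locally compact Hausdorff and $\G_\theta$ is the groupoid of germs of an action on it, $\G_\theta$ is automatically locally compact and \'{e}tale, so only Hausdorffness is at issue. By \cite[Proposition 2.3]{SteinbergSimple}, because $S$ is assumed Hausdorff, it suffices to check that each domain $D_e^\theta$ is clopen in $\tfE$. This holds automatically: identifying filters with characters of $E$, the set $D_e^\theta = \{F \in \tfE : e \in F\}$ is the preimage of $\{1\}$ under the continuous evaluation map $F \mapsto \chi_F(e) \in \{0,1\}$, hence clopen in $\fE$ and therefore clopen in the subspace $\tfE$. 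I would flag that this clopenness is not among the stated hypotheses, so unlike in Theorem \ref{thm:kerneluniqueness} it must be supplied here rather than assumed.

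With Hausdorffness established, the remainder is immediate: Proposition \ref{prop:stabilizers} gives $\G_\theta(Z) = \iso(\G_\theta)^\circ$, and \cite[Theorem 3.1(c)]{BNRSW} applied with $G = \G_\theta$ then yields that $\varphi$ is injective if and only if $\varphi\vert_{C_r^*(\G_\theta(Z))}$ is injective. I expect the only real obstacle to be the Hausdorffness verification, and within it the clopenness of the $D_e^\theta$; everything else is a direct appeal to results already in hand, and the reduction to \cite{BNRSW} is purely formal.
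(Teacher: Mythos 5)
Your proposal is correct and follows essentially the same route as the paper: invoke Proposition \ref{prop:stabilizers} to identify $\G_\theta(Z)$ with $\iso(\G_\theta)^\circ$ and then apply \cite[Theorem 3.1]{BNRSW}. The only difference is that you make explicit the Hausdorffness of $\G_\theta$ (via clopenness of the sets $D_e^\theta$ and \cite[Proposition 2.3]{SteinbergSimple}), a point the paper leaves implicit in the hypothesis that $S$ is Hausdorff; your verification of it is correct.
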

\begin{proof}
	By Proposition \ref{prop:stabilizers}, we have $\G_\theta(Z) = \iso(\G_\theta)^\circ$ since $E$ is $0$-disjunctive. Therefore, we can just apply \cite[Theorem 3.1]{BNRSW}.
\end{proof}


\section{Amenability of the Universal Groupoid}
\label{sec:amenability}
In this section, we establish conditions that guarantee the amenability of the universal groupoid for a certain class of inverse semigroups. We plan to invoke 
\cite[Corollary 4.5]{ren-wil}, which requires the existence of a certain cocycle on $\G(S)$. (Actually, the original result \cite[Theorem 9.3]{spielberg} is sufficient for 
our purposes.) These conditions will also guarantee that $\G(Z)$ is closed, which in turn allows us to build a conditional expectation from $C_r^*(\G(S))$ to 
$C_r^*(\G(Z))$ in certain cases.

Let $S$ be an inverse semigroup, and recall that $S/\mu$ denotes the Munn quotient of $S$. We aim to relate the amenability of $\G(S)$ to that of $\G(Z)$ and 
$\G(S/\mu)$. To do so, we'll use \cite[Theorem 5.3.14]{ananth-renault}, which is the basis for the aforementioned cocycle results from \cite{ren-wil} and \cite{spielberg}. 
The first step is to produce a \emph{strongly surjective} homomorphism $\varphi : \G(S) \to \G(S/\mu)$, meaning that $\varphi(\G(S)_F) = \G(S/\mu)_F$ for all $F \in \fE$.

\begin{prop}
\label{prop:51}
	Define $\varphi : \G(S) \to \G(S/\mu)$ by
	\[
		\varphi([s, F]) = [\mu(s), F].
	\]
	Then $\varphi$ is a well-defined, continuous, and strongly surjective groupoid homomorphism.
\end{prop}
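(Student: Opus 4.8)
The plan is to exploit the fact that $\mu$ is idempotent separating, so that its restriction $\mu|_{E(S)} : E(S) \to E(S/\mu)$ is a semilattice isomorphism. This identifies the filter space $\fE$ of $S$ with the filter space of $S/\mu$, letting us regard a single $F$ as a filter over either semilattice. First I would record the compatibility of the two standard actions under this identification: writing $\beta'$ for the action of $S/\mu$, the identity $\mu(s)\mu(e)\mu(s)^* = \mu(ses^*)$ together with the fact that $\mu|_{E(S)}$ is an order isomorphism gives $\beta'_{\mu(s)}(F) = \beta_s(F)$ for every $F \in D^{\beta}_{s^*s}$. In particular $D^{\beta'}_{\mu(s)^*\mu(s)} = D^{\beta}_{s^*s}$ under the identification, so $[\mu(s), F]$ is a genuine element of $\G(S/\mu)$ whenever $[s,F] \in \G(S)$, and $\varphi$ is at least defined on the right sets.

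With this identification in hand, well-definedness is immediate: if $[s,F] = [t,F]$ then $se = te$ for some idempotent $e \in F$, whence $\mu(s)\mu(e) = \mu(t)\mu(e)$ with $\mu(e)$ lying in the filter $F$ viewed over $E(S/\mu)$, so $[\mu(s), F] = [\mu(t), F]$. That $\varphi$ is a groupoid homomorphism then reduces to checking the germ relations. Applying $\varphi$ to $[t, \beta_s(F)][s,F] = [ts, F]$ and using $\beta_s(F) = \beta'_{\mu(s)}(F)$ shows that $\varphi$ carries composable pairs to composable pairs and that $\varphi([t,\beta_s(F)])\, \varphi([s,F]) = [\mu(t)\mu(s), F] = \varphi([ts,F])$; compatibility with inversion, $\varphi([s,F]^{-1}) = \varphi([s,F])^{-1}$, follows from the same intertwining applied to $[s,F]^{-1} = [s^*, \beta_s(F)]$.

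For continuity I would use the \'{e}tale bisection structure rather than chasing preimages of open sets. Each basic open set $\Theta(s,U)$ is homeomorphic to $U$ via $[s,F] \leftrightarrow F$, and likewise $\Theta(\mu(s),U) \cong U$; under these homeomorphisms $\varphi|_{\Theta(s,U)}$ is simply the identity map on $U$, hence continuous. Indeed $\varphi(\Theta(s,U)) = \Theta(\mu(s),U)$, so $\varphi$ is simultaneously open. Since the sets $\Theta(s,U)$ form an open cover of $\G(S)$ and $\varphi$ restricts to a continuous map on each of them (and is independent of the chosen representative by the previous paragraph), the gluing lemma yields continuity of $\varphi$ on all of $\G(S)$.

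Strong surjectivity is the most direct part. Fixing $F$ and $[\bar u, F] \in \G(S/\mu)_F$, so that $\bar u^*\bar u \in F$, surjectivity of $\mu$ provides $s \in S$ with $\mu(s) = \bar u$; then $\mu(s^*s) = \bar u^*\bar u$ lies in $F$, forcing $s^*s \in F$, so $[s,F] \in \G(S)_F$ and $\varphi([s,F]) = [\bar u, F]$. Together with the evident inclusion $\varphi(\G(S)_F) \subseteq \G(S/\mu)_F$ this gives $\varphi(\G(S)_F) = \G(S/\mu)_F$. I expect the only genuine obstacle to be bookkeeping: every step rests on the identification of the two filter spaces and on the intertwining $\beta_s = \beta'_{\mu(s)}$, so the main care lies in setting these up cleanly at the outset and then invoking them uniformly, after which the individual verifications are routine semilattice and germ computations.
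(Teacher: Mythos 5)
Your proposal is correct and follows essentially the same route as the paper: the same well-definedness computation via $se = te \Rightarrow \mu(s)\mu(e) = \mu(t)\mu(e)$, the same intertwining $\beta'_{\mu(s)}(F) = \beta_s(F)$ of the two standard actions (which the paper asserts without proof but uses in exactly the same way for the homomorphism property), and the same strong-surjectivity argument from surjectivity of $\mu$ together with the identification $E(S) \cong E(S/\mu)$. The only divergence is the continuity step: the paper runs a net argument (if $[s_i, F_i] \to [s,F]$ then the germs eventually agree, so $[\mu(s_i), F_i] \to [\mu(s), F]$), whereas you use the \'etale bisection structure, observing that $\varphi$ carries $\Theta(s,U)$ onto $\Theta(\mu(s),U)$ and is the identity on $U$ under the canonical source homeomorphisms; this is equally valid, arguably cleaner, and gives openness of $\varphi$ as a free by-product, which the paper's argument does not record.
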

\begin{proof}
	First observe that if $[s, F] = [t, F]$ in $\G(S)$, then there is an $e \in F$ such that $se = te$. But then $\mu(s)e = \mu(se) = \mu(te) = \mu(t)e$, so 
	$[\mu(s), F] = [\mu(t), F]$, and $\varphi$ is well-defined. It is straightforward to see that $\varphi$ is a homomorphism: it is easily verified that 
	$\theta_{\mu(s)}(F) = \theta_s(F)$, so
	\begin{align*}
		\varphi([t, \theta_s(F)][s, F]) &= \varphi([ts,F]) \\
			&= [\mu(ts),F] \\
			&= [\mu(t), \theta_{\mu(s)}(F) ][\mu(s), F] \\
			&= \varphi([t, \theta_s(F)]) \varphi([s, F]),
	\end{align*}
	Now we check that $\varphi$ is continuous. Suppose $[s_i, F_i] \to [s, F]$ in $\G(S)$. Let $U \subseteq \fE$ be an open neighborhood of $F$, and let 
	$\Theta(s, U)$ and $\Theta(\mu(s), U)$ be the corresponding basic open sets in $\G(S)$ and $\G(S/\mu)$, respectively. Since $[s_i, F_i]$ converges to 
	$[s, F]$, eventually $[s_i, F_i] \in \Theta(s, U)$. Thus $[s_i, F_i] = [s, F_i]$ eventually, so there are idempotents $e_i \in F_i$ such that $s_ie_i = se_i$. But 
	then
	\[
		\mu(s_i) e_i = \mu(s_i e_i) = \mu(se_i) = \mu(s) e_i,
	\]
	so $[\mu(s_i), F_i] = [\mu(s), F_i]$ in $\G(S/\mu)$. Therefore, $[\mu(s_i), F_i]$ is eventually in $\Theta(\mu(s), U)$, so $[\mu(s_i), F_i] \to [\mu(s), F]$. It follows 
	that $\varphi$ is continuous.
	
	Finally, it is straightforward to see that $\varphi$ is strongly surjective: simply notice that for any $F \in \fE$,
	\[
		\G(S/\mu)_F = \{ [\mu(s), F] : s \in S \} = \varphi \bigl( \{[s, F] : s \in S\} \bigr) = \varphi(\G(S)_F). \qedhere
	\]
\end{proof}

In order to apply \cite[Theorem 5.3.14]{ananth-renault}, we need to understand the kernel of the homomorphism $\varphi$. It is clear that 
$\G(Z) \subseteq \ker(\varphi)$. Observe that if $\varphi([s, F]) = [e, F]$ for some idempotent $e \in F$, then $[\mu(s), F] = [e, F]$, so $\mu(s)e = e$. In other 
words, $se$ is $\mu$-related to $e$ in $S$, so $se \in Z$. Thus $s \in \up Z$. This set could be quite hard to understand in general. However, we can do better 
if we assume that $S/\mu$ is $0$-$E$-unitary. In this case, $\mu(s)e=e$ implies that $\mu(s)$ is an idempotent. That is, $\mu(s) = f$ for some $f \in E(S)$, so 
$s$ is $\mu$-related to $f$. This forces $s \in Z$, so $\ker(\varphi) = \G(Z)$. Thus we have shown:

\begin{prop}
	Let $S$ be an inverse semigroup, and assume the Munn quotient $S/\mu$ is 0-$E$-unitary. Then the kernel of the homomorphism $\varphi$ from Proposition
	\ref{prop:51} is $\G(Z)$.
\end{prop}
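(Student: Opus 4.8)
The plan is to realize $\ker(\varphi)$ as the preimage $\varphi^{-1}\bigl(\G(S/\mu)^{\scriptscriptstyle{(0)}}\bigr)$ of the unit space and then to determine precisely which germs $[s,F]$ land in it. One inclusion is immediate: if $z \in Z = \Ker\mu$, then $\mu(z)$ is an idempotent of $S/\mu$, so $\varphi([z,F]) = [\mu(z),F]$ is a unit and $\G(Z) \subseteq \ker(\varphi)$. The substance of the proposition is the reverse inclusion $\ker(\varphi) \subseteq \G(Z)$.

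First I would translate the condition that $[\mu(s),F]$ is a unit into an algebraic statement about $s$. A germ in $\G(S/\mu)$ lies in the unit space exactly when it agrees near $F$ with an idempotent germ, and one checks that this happens if and only if there is an idempotent $e \in F$ with $\mu(s)\,e = e$, i.e. $e \leq \mu(s)$ in the natural partial order. Here I would use that $\mu$ is idempotent separating, so that $E(S/\mu) \cong E$ and $e$ may be taken to be (the image of) an idempotent of $S$ lying in $F$. Rewriting $\mu(s)\,e = e$ as $\mu(se) = \mu(e)$ gives $se \mcmu e$, hence $se \in Z$; and since $e \in F$ we have $[s,F] = [se,F]$, which already displays $[s,F]$ as an element of $\G(Z)$. (This in fact yields the set equality $\ker(\varphi) = \G(Z)$ with no further hypothesis; the role of the $0$-$E$-unitary assumption, carried out next, is to keep the chosen representative $s$ itself inside $Z$.)

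To reach the sharper conclusion indicated before the proposition, I would invoke that $S/\mu$ is $0$-$E$-unitary. Since $F$ is a filter it omits the zero of $S$, so the idempotent $e$ above is nonzero; the relation $0 \neq e \leq \mu(s)$ together with the defining property of $0$-$E$-unitarity (a nonzero idempotent below an element forces that element to be idempotent) shows $\mu(s)$ is idempotent. Writing $\mu(s) = \mu(f)$ with $f \in E$ gives $s \mcmu f$, so $s \in Z$ and $[s,F] \in \G(Z)$.

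The step I expect to be the main obstacle is the first translation: one must argue carefully that membership of $[\mu(s),F]$ in the unit space is equivalent to the existence of an idempotent $e \in F$ with $e \leq \mu(s)$ --- the genuinely stronger statement, rather than merely $[\mu(s),F]$ lying in the isotropy and fixing $F$ --- and one must confirm that this $e$ is automatically nonzero so that the $0$-$E$-unitary property applies. Once this dictionary between unit germs and the partial order is set up, both inclusions follow from routine calculation with the $\mu$-relation.
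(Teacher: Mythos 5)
Your proof is correct, and its core computation is the same as the paper's: translate ``$\varphi([s,F])$ is a unit of $\G(S/\mu)$'' into the existence of an idempotent $e \in F$ with $\mu(s)e = e$ (using that $\mu$ is idempotent separating to identify $E(S/\mu)$ with $E$, and filters on one with filters on the other), then apply $0$-$E$-unitarity---legitimately, since $e \in F$ forces $e \neq 0$, a point the paper uses silently but you make explicit---to conclude $\mu(s)$ is idempotent, hence $s \in Z$ and $[s,F] \in \G(Z)$. What you add is genuine, though. Your parenthetical observation that the witnessing idempotent lies in $F$, so that $[s,F] = [se,F]$ with $\mu(se) = \mu(e)$, i.e.\ $se \mcmu e$ and $se \in Z$, shows that every germ in $\ker(\varphi)$ already admits a representative in $Z$; thus the set equality $\ker(\varphi) = \G(Z)$ holds with no hypothesis on $S/\mu$ whatsoever. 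The paper stops short of this: from $se \in Z$ it concludes only that $s \in {\up Z}$, remarks that this set ``could be quite hard to understand in general,'' and introduces the $0$-$E$-unitary hypothesis precisely to force the chosen representative $s$ itself into $Z$. In effect the paper conflates a germ with its representative, and your argument strengthens the proposition by showing the hypothesis is superfluous for the kernel identification itself (it may still matter downstream---for instance, $0$-$E$-unitarity of $S/\mu$ is what guarantees $\G(S/\mu)$ is Hausdorff, which the paper later uses to deduce that $\G(Z)$ is closed---but not for this set-theoretic equality). One detail to keep careful in a final write-up, which you flag correctly and the paper elides: extracting $\mu(s)e = e$ from a germ equality $[\mu(s),F] = [\bar{e},F]$ requires replacing $\bar{e}$ by its product with the idempotent witnessing the germ relation, since germ equality only gives agreement after multiplying by some idempotent in $F$.
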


It now follows from \cite[Theorem 5.3.14]{ananth-renault} that $\G(S)$ is amenable if and only if $\G(Z)$ and $\G(S/\mu)$ are. We can refine this result even 
further---the question of whether $\G(Z)$ is amenable can be traced back to the structure of $S$ itself.

\begin{lem}
	Let $S$ be a Clifford semigroup, and for each $e \in E(S)$, let $H_e = \{s \in S : s^*s = ss^* =e\}$ denote the maximal subgroup of $S$ at $e$. Then the 
	groupoid $\G(S)$ is amenable if and only if $H_e$ is an amenable group for each $e \in E(S)$.
\end{lem}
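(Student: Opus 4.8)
The plan is to lean on Theorem \ref{thm:clifford}, which identifies $\G(S)$ as a group bundle over $\fE$ whose fiber over a filter $F$ is the direct limit $H_F = \varinjlim_F H_e$, with $H_F \cong H_e$ at a principal filter $\up e$. Amenability of a group bundle is governed entirely by its isotropy groups, so the strategy is to translate amenability of $\G(S)$ into amenability of the fibers $H_F$, and then to translate amenability of all the $H_F$ into amenability of the subgroups $H_e$. I would carry out the two implications separately.

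For the forward direction, suppose $\G(S)$ is amenable. Amenability of a groupoid passes to each of its isotropy groups \cite{ananth-renault}; since $\G(S)$ is a group bundle, the isotropy group at a unit $F$ is exactly the fiber $\G(S)_F$. Taking $F = \up e$ to be a principal filter and invoking Theorem \ref{thm:clifford}(3), we obtain that $H_e \cong \G(S)_{\up e}$ is amenable for every $e \in E(S)$. This direction is essentially bookkeeping once the group-bundle structure is in hand.

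For the converse, I would first check that amenability of all the $H_e$ forces each fiber $H_F$ to be amenable. Here I would use the concrete description of the direct limit: the canonical maps $\varphi_e : H_e \to H_F$ satisfy $\varphi_e(H_e) \subseteq \varphi_f(H_f)$ whenever $f \leq e$, so $\{\varphi_e(H_e)\}_{e \in F}$ is an upward-directed family of subgroups (using that $F$ is downward directed), and its union is all of $H_F$. Each $\varphi_e(H_e)$ is a homomorphic image of the amenable group $H_e$, hence amenable, and a directed union of amenable groups is amenable; therefore every fiber $H_F$ is amenable. It then remains to conclude that the group bundle $\G(S)$ itself is amenable.

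This final step is the crux and the main obstacle. It rests on the general fact that a group bundle is amenable precisely when all of its fibers are amenable groups, i.e.\ the passage from fiberwise amenability back to amenability of the total groupoid. The difficulty here is analytic rather than algebraic: one must produce approximately invariant means that vary appropriately over $\fE$, which is exactly the measurable/continuous-field input packaged in the standard treatment of amenable groupoids in \cite{ananth-renault}. Because $\G(S)$ is \'{e}tale (and hence carries the counting Haar system), this general characterization applies directly, and I would invoke it rather than reprove it. Everything else in the argument is elementary, so this is the one point where more than routine verification is needed.
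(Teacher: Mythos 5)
Your proposal is correct and follows the paper's proof essentially verbatim: the forward direction extracts each $H_e$ as the isotropy group of $\G(S)$ at a principal filter $\up e$ and uses that amenability passes to isotropy groups, while the converse observes that each fiber $\G(S)_F = \varinjlim_{e \in F} H_e$ is a direct limit of amenable groups and then invokes the fact that an \'{e}tale group bundle with amenable fibers is amenable. The only differences are cosmetic: the paper cites \cite[Theorem 4]{sims-williams2013} for that last step where you appeal to the general machinery of \cite{ananth-renault}, and you spell out the directed-union-of-images argument that the paper compresses into ``direct limits of amenable groups are amenable.''
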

\begin{proof}
	We proved earlier that for any filter $F \in \fE$, the fiber of $\G(S)$ at $F$ is the direct limit
	\[
		\G(S)_F = \varinjlim_{e \in F} H_e.
	\]
	Since direct limits of amenable groups are amenable, it follows that the fibers of $\G(S)$ are all amenable provided every $H_e$ is amenable. This guarantees 
	that $\G(S)$ is amenable by \cite[Theorem 4]{sims-williams2013}. On the other hand, if $F=\up e$ is a principal filter, then $\G(S)_F = H_e$. Therefore, each 
	$H_e$ is a subgroup of $\G(S)$. It follows that if $\G(S)$ is amenable, then each $H_e$ must be amenable.
\end{proof}

\begin{thm}
\label{thm:amenable}
	Let $S$ be an inverse semigroup such that $S/\mu$ is $0$-$E$-unitary. Then the groupoid $\G(S)$ is amenable if and only if $\G(S/\mu)$ is amenable and 
	each $\mu$-class
	\[
		Z_e = \left\{ s \in S : \mu(s) = \mu(e) \right\}
	\]
	is an amenable group.
\end{thm}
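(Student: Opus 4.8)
The plan is to assemble this theorem from the three results immediately preceding it, since all the machinery is already in place. The hypothesis that $S/\mu$ is $0$-$E$-unitary lets us invoke Proposition \ref{prop:51} together with the proposition identifying the kernel: we obtain a continuous, strongly surjective groupoid homomorphism $\varphi : \G(S) \to \G(S/\mu)$ whose kernel is exactly $\G(Z)$. This is precisely the input required for \cite[Theorem 5.3.14]{ananth-renault}, which governs amenability along a short exact sequence of groupoids. First I would apply that theorem to conclude that $\G(S)$ is amenable if and only if both $\G(Z)$ and $\G(S/\mu)$ are amenable.

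The second step is to rewrite the condition ``$\G(Z)$ is amenable'' in purely semigroup-theoretic terms. Here $Z = Z(E)$ is a Clifford semigroup, so the preceding Lemma applies directly: $\G(Z)$ is amenable if and only if the maximal subgroup of $Z$ at each idempotent $e$ is amenable. The only point requiring care is identifying these maximal subgroups with the $\mu$-classes $Z_e$. This is immediate from the definitions recalled in Section \ref{sec:prelim}: in the Clifford semigroup $Z$, the maximal subgroup at $e$ is $\{s \in Z : s^*s = ss^* = e\}$, and since $Z = \bigcup_e Z_e$ is precisely the union of the $\mu$-classes, this subgroup coincides with $Z_e = \{s \in S : ses^* = e\}$. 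Thus $\G(Z)$ is amenable if and only if every $\mu$-class $Z_e$ is an amenable group.

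Combining the two steps yields the stated equivalence: $\G(S)$ is amenable if and only if $\G(S/\mu)$ is amenable and each $Z_e$ is amenable. I do not anticipate a genuine obstacle in this argument, as it is essentially a concatenation of the three cited results. The only subtlety worth flagging in the write-up is the verification that the Clifford semigroup $Z$ has maximal subgroups equal to the $\mu$-classes $Z_e$, so that the Lemma on Clifford semigroups transfers cleanly; this amounts to unwinding notation rather than any new computation. Everything else is a direct citation of \cite[Theorem 5.3.14]{ananth-renault} and the preceding propositions.
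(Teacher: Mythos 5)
Your proposal is correct and follows essentially the same route as the paper's own proof: apply \cite[Theorem 5.3.14]{ananth-renault} to the strongly surjective homomorphism $\varphi$ of Proposition \ref{prop:51}, use the $0$-$E$-unitary hypothesis to identify $\ker\varphi$ with $\G(Z)$, and then invoke the Clifford-semigroup lemma to translate amenability of $\G(Z)$ into amenability of each $Z_e$. Your extra remark identifying the maximal subgroups of the Clifford semigroup $Z$ with the $\mu$-classes $Z_e$ is a correct (and worthwhile) unwinding of a step the paper leaves implicit.
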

\begin{proof}
	Since $\varphi : \G(S) \to \G(S/\mu)$ is strongly surjective, $\G(S)$ is amenable if and only if $\ker \varphi$ and $\G(S/\mu)$ are by Theorem 5.3.14 of
	\cite{ananth-renault}. But $S/\mu$ is $0$-$E$-unitary, so $\ker \varphi = \G(Z)$. But we know from the previous lemma that this group bundle is amenable if 
	and only if every $Z_e$ is amenable.
\end{proof}

While it is easy to characterize when $\G(Z)$ is amenable, it may be difficult to do the same for $\G(S/\mu)$ in general. However, we can say something definitive
if we assume that $S$ does not have a 0 element. In this case, we assume $S/\mu$ is $E$-unitary, and we let $G_\mu = \sigma(S/\mu)$ denote the maximal group 
image of $S/\mu$. Then we still have a homomorphism $\varphi : \G(S) \to \G(S/\mu)$ with $\ker \varphi = \G(Z)$, so $\G(S)$ is amenable if and only if $\G(Z)$ and 
$\G(S/\mu)$ are. However, we now have a way of relating the amenability of $\G(S/\mu)$ to that of $G_\mu$.

\begin{prop}
	Let $S$ be an inverse semigroup with maximal group image $\sigma(S)$. The map $c : \G(S) \to \sigma(S)$ defined by
	\[
		c([s, F]) = \sigma(s)
	\]
	is a continuous cocycle on $\G(S)$. If $S$ is $E$-unitary, then $\ker c = \G(S)^{\scriptscriptstyle{(0)}}$.
\end{prop}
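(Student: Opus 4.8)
The plan is to verify the three assertions in turn: that $c$ is well-defined, that it is a continuous cocycle, and that the kernel is trivial when $S$ is $E$-unitary. First I would recall that $\sigma : S \to \sigma(S)$ is the map to the maximal group image, which is a semigroup homomorphism sending all idempotents to the identity and satisfying $s \leq t \implies \sigma(s) = \sigma(t)$. For well-definedness, suppose $[s, F] = [t, F]$ in $\G(S)$; then there is an idempotent $e \in F$ with $se = te$. Applying $\sigma$ and using that it kills idempotents gives $\sigma(s) = \sigma(se) = \sigma(te) = \sigma(t)$, so $c([s,F]) = c([t,F])$ is unambiguous.

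Next I would check the cocycle property. The relevant composable pairs have the form $[t, \beta_s(F)]$ and $[s, F]$, whose product is $[ts, F]$. Since $\sigma$ is a homomorphism, $c([ts, F]) = \sigma(ts) = \sigma(t)\sigma(s) = c([t, \beta_s(F)]) \, c([s, F])$, which is exactly the cocycle identity into the group $\sigma(S)$. For continuity, the argument mirrors the continuity proof in Proposition \ref{prop:51}: if $[s_i, F_i] \to [s, F]$, then eventually $[s_i, F_i] \in \Theta(s, U)$ for a suitable neighborhood $U$ of $F$, so $[s_i, F_i] = [s, F_i]$ eventually, giving idempotents $e_i \in F_i$ with $s_i e_i = s e_i$. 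Applying $\sigma$ yields $\sigma(s_i) = \sigma(s_i e_i) = \sigma(s e_i) = \sigma(s)$ eventually, so $c([s_i, F_i]) = c([s, F])$ eventually; since $\sigma(S)$ is discrete, this is continuity. (It is worth noting that $c$ is in fact locally constant on basic open sets $\Theta(s, U)$, since $c$ takes the constant value $\sigma(s)$ there.)

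For the final claim, assume $S$ is $E$-unitary, meaning that $\sigma(s)$ is the identity forces $s$ to be an idempotent. Suppose $[s, F] \in \ker c$, so $\sigma(s) = 1$ in $\sigma(S)$. By the $E$-unitary hypothesis, $s$ is then idempotent, say $s = f \in E(S)$, with $f = s^*s \in F$. But then $[s, F] = [f, F] = [s^*s, F] = d([s,F])$ is a unit in $\G(S)$, so $[s, F] \in \G(S)^{\scriptscriptstyle{(0)}}$. The reverse inclusion is immediate since units are of the form $[e, F]$ with $e \in E(S)$ and $c([e, F]) = \sigma(e) = 1$. Hence $\ker c = \G(S)^{\scriptscriptstyle{(0)}}$.

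None of the three steps presents a serious obstacle; the proof is essentially a direct translation of the algebraic properties of $\sigma$ into groupoid language. The only point requiring mild care is ensuring that the continuity argument is phrased in terms of eventual equality of germs rather than eventual closeness, but since $\sigma(S)$ carries the discrete topology this reduces to the observation that $c$ is constant on each basic open set. The $E$-unitary hypothesis is precisely what is needed to upgrade "$\sigma(s) = 1$'' to "$s$ is idempotent,'' which is the crux of identifying the kernel with the unit space.
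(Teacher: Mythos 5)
Your proposal is correct and follows essentially the same route as the paper's proof: well-definedness and the cocycle identity via the fact that $\sigma$ is a homomorphism killing idempotents, continuity via eventual equality of germs (your observation that $c$ is constant on each basic open set $\Theta(s,U)$ is just a cleaner packaging of the paper's net argument), and the identification $\ker c = \G(S)^{\scriptscriptstyle{(0)}}$ from the $E$-unitary characterization $\sigma^{-1}(1) = E(S)$. No gaps.
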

\begin{proof}
	We first need to check that $c$ is well-defined. Suppose $[s, F] = [t, F]$ in $\G(S)$. Then there is an idempotent $e \in F$ such that $se = te$, so
	\begin{align*}
		c([s, F]) = \sigma(s) = \sigma(se) = \sigma(te) = \sigma(t) = c([t, F]).
	\end{align*}
	Next, we check that $c$ is a homomorphism. Well,
	\begin{align*}
		c([t, \theta_s(F)][s, F]) &= c([ts, F]) \\
			&= \sigma(ts) \\
			&= \sigma(t) \sigma(s) \\
			&= c([t,\theta_s(F)]) c([s, F]).
	\end{align*}
	Now suppose $[s_i, F_i] \to [s, F]$ in $\G(S)$. Then $F_i \to F$ and for sufficiently large $i$, there are idempotents $e_i \in F_i$ such that $s_i e_i = se_i$. 
	But then
	\[
		c([s_i, F_i]) = \sigma(s_i) = \sigma(s_ie_i) = \sigma(se_i) = \sigma(s),
	\]
	so $c([s_i, F_i])$ is eventually constant, hence convergent. Thus $c$ is continuous.
	Finally, if we assume $S$ is $E$-unitary, then
	\begin{align*}
		\ker c &= \left\{ [s, F] \in \G(S) : \sigma(s) = 1 \right\} \\
			&= \left\{ [s, F] \in \G(S) : se = e \text{ for some } e \in E(S) \right\} \\
			&= \left\{ [s, F] \in \G(S) : s \in E(S) \right\} \\
			&= \G(S)^{\scriptscriptstyle{(0)}}. \qedhere
	\end{align*}

\end{proof}

\begin{prop}
	If $S$ is an $E$-unitary inverse semigroup, then $\G(S)$ is amenable if and only if $\sigma(S)$ is an amenable group.
\end{prop}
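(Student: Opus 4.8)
The plan is to read off both implications from the cocycle $c : \G(S) \to \sigma(S)$, $c([s,F]) = \sigma(s)$, constructed in the preceding proposition, together with the fact established there that $\ker c = \G(S)^{\scriptscriptstyle{(0)}}$ when $S$ is $E$-unitary. Since $E$-unitary inverse semigroups are Hausdorff, $\G(S)$ is a Hausdorff \'etale groupoid, and its unit space $\G(S)^{\scriptscriptstyle{(0)}} \cong \fE$ is an amenable groupoid (a bundle of trivial groups). Thus the kernel of the cocycle is amenable, and the natural tool is the amenability-via-cocycle machinery of \cite[Corollary 4.5]{ren-wil} (equivalently \cite[Theorem 9.3]{spielberg}).

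For the forward implication, suppose $\sigma(S)$ is amenable. Then $c$ is a continuous cocycle into an amenable discrete group whose kernel $\ker c = \G(S)^{\scriptscriptstyle{(0)}}$ is amenable, so \cite[Corollary 4.5]{ren-wil} yields at once that $\G(S)$ is amenable. This is entirely parallel to the role played by $\varphi$ in the proof of Theorem \ref{thm:amenable}, with the group $\sigma(S)$ now in place of the Munn quotient groupoid.

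For the converse I would not appeal to an ``if and only if'' form of the cocycle theorem, since amenability of an \'etale groupoid carrying a cocycle with amenable kernel need not force the range group to be amenable in general. Instead I would exhibit $\sigma(S)$ concretely as an isotropy group of $\G(S)$. The key point is that, because $S$ is $E$-unitary (hence has no zero), the full semilattice $E$ is itself a filter, giving a distinguished point $E \in \fE$. One checks that $\beta_s(E) = E$ for all $s$: for any idempotent $f$, taking $e = s^*(f \wedge ss^*)s \in E$ gives $ses^* = f \wedge ss^* \le f$, so $f \in \beta_s(E)$, and the reverse inclusion is clear. Hence $E$ is a fixed point, every germ $[s,E]$ lies in the isotropy group $\G(S)_E^E$, and $[s,E] = [t,E]$ exactly when $se = te$ for some idempotent $e$. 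Since the minimum group congruence on an $E$-unitary inverse semigroup is precisely $s\,\sigma\,t \iff se = te$ for some $e \in E(S)$, the cocycle restricts to an isomorphism $\G(S)_E^E \xrightarrow{\cong} \sigma(S)$, $[s,E] \mapsto \sigma(s)$.

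With this identification, the converse is immediate: $\G(S)_E^E$ is an isotropy group, hence a closed subgroupoid, of the amenable \'etale groupoid $\G(S)$, and amenability passes to isotropy groups, so $\sigma(S) \cong \G(S)_E^E$ is amenable. The main obstacle is this converse, and specifically the verification that $E$ is a $\beta$-fixed point whose isotropy is exactly $\sigma(S)$; the forward direction is a routine application of the cited cocycle result once the amenability of the unit space is noted.
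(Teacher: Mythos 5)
Your proof is correct, and while your forward direction coincides with the paper's (the cocycle $c$ with $\ker c = \G(S)^{\scriptscriptstyle{(0)}}$ plus \cite[Corollary 4.5]{ren-wil}), your converse takes a genuinely different route. The paper argues $C^*$-algebraically: if $\G(S)$ is amenable it has weak containment, hence $S$ does (via Paterson's isomorphisms), and then \cite[Corollary 2.6]{milan} gives amenability of $\sigma(S)$. You instead argue at the groupoid level: since $S$ is $E$-unitary it has no zero, so the whole semilattice $E$ is a filter, and your computation (taking $e = s^*fs$, so that $ses^* = f \wedge ss^* \leq f$) correctly shows $\beta_s(E) = E$ for all $s$; the germ relation at $E$ is exactly the minimum group congruence, so $c$ restricts to an isomorphism $\G(S)_E^E \cong \sigma(S)$, and amenability of $\G(S)$ then passes to this isotropy group. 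The fact that isotropy groups of amenable \'{e}tale groupoids are amenable is the one ingredient you assert without proof, but it is standard, and the paper itself invokes exactly this fact in its lemma on Clifford semigroups (where amenability of $\G(S)$ forces each $H_e$ to be amenable), so your argument meets the paper's own standard of rigor. What your route buys: it is self-contained at the topological-groupoid level, avoids the external weak-containment result from \cite{milan}, and exhibits the nice structural fact that $\sigma(S)$ sits inside $\G(S)$ as the isotropy group over the top filter for \emph{any} inverse semigroup without zero (so that direction does not even need $E$-unitarity). What the paper's route buys: it is shorter given the available citations, and by factoring through weak containment it shows the a priori weaker hypothesis $C^*(\G(S)) = C_r^*(\G(S))$ already forces $\sigma(S)$, and hence $\G(S)$, to be amenable.
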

\begin{proof}
	From the previous proposition, we have a cocycle $c : \G(S) \to \sigma(S)$ with $\ker c = \G(S)^{\scriptscriptstyle{(0)}}$. Thus we know from 
	\cite[Corollary 4.5]{ren-wil} that if $\G(S)^{\scriptscriptstyle{(0)}}$ and $\sigma(S)$ are amenable, then so is $\G(S)$. But the former is always amenable, 
	so if $\sigma(S)$ is amenable, then so is $\G(S)$. On the other hand, supppose that $\G(S)$ is amenable. Since $S$ is $E$-unitary, we may assume that $E$ does not contain a zero. (Otherwise $\sigma(S)$ is trivial and hence amenable.) Thus the semilattice $E$ is a filter. Notice that for all $s \in S$, $[s^*s, E] = [ss^*, E]$ since $s^*s f = ss^* f$ where $f = (s^*s)(ss^*)$. Thus the group $\G(S)^{E}_{E}$ consists of equivalence classes $[s,E]$ for all $s \in S$ where $[s,E] = [t,E]$ if and only if $\sigma(s) = \sigma(t)$. It is then easy to check that the map $[s,E] \mapsto \sigma(s)$ is an isomorphism from $\G(S)^{E}_{E}$ to $\sigma(S)$. Since $\G(S)$ amenable implies that $\G(S)^{E}_{E}$ is amenable, we have that $\sigma(S)$ is amenable.
\end{proof}

By applying this proposition to $S/\mu$, we immediately obtain the following corollary. Note that this result strengthens \cite[Corollary 2.6]{milan}.

\begin{cor}
	Let $S$ be an inverse semigroup for which $S/\mu$ is $E$-unitary. The groupoid $\G(S)$ is amenable if and only if each $\mu$-class
	\[
		Z_e = \left\{ s \in S : \mu(s) = \mu(e) \right\}
	\]
	is amenable and the maximal group image $G_\mu = \sigma(S/\mu)$ is amenable.
\end{cor}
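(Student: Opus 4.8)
The plan is to assemble the corollary directly from the proposition that immediately precedes it together with the structural results already established in this section; no new machinery is needed. First I would apply the preceding proposition to the Munn quotient $S/\mu$. Since $S/\mu$ is assumed to be $E$-unitary, that proposition applies verbatim and yields that $\G(S/\mu)$ is amenable if and only if its maximal group image $\sigma(S/\mu) = G_\mu$ is amenable. This settles one of the two factors.

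Next I would invoke the strongly surjective homomorphism $\varphi : \G(S) \to \G(S/\mu)$ of Proposition \ref{prop:51}. As recorded in the discussion preceding the statement, the $E$-unitary hypothesis on $S/\mu$ forces $\ker \varphi = \G(Z)$; the argument is the exact analogue of the $0$-$E$-unitary case treated above, since $\mu(s)e = e$ for some idempotent $e$ already forces $\mu(s)$ to be idempotent, whence $s \in Z$. Feeding the resulting extension $\G(Z) \to \G(S) \to \G(S/\mu)$ into \cite[Theorem 5.3.14]{ananth-renault}, and using the strong surjectivity of $\varphi$, then shows that $\G(S)$ is amenable if and only if both $\G(Z)$ and $\G(S/\mu)$ are amenable.

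It then remains to characterize amenability of $\G(Z)$. Here I would apply the lemma on Clifford semigroups above to $Z = Z(E)$, which is Clifford. The maximal subgroup of $Z$ at an idempotent $e$ is precisely the $\mu$-class $Z_e = \{s : ses^* = e\}$ (not the possibly larger $\mcH$-class $H_e$), so that lemma gives that $\G(Z)$ is amenable if and only if each $Z_e$ is an amenable group. Chaining the three equivalences then produces exactly the stated biconditional.

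Since the content is entirely an assembly of prior results, I expect no serious obstacle. The only points requiring genuine care are confirming that the identification $\ker \varphi = \G(Z)$ survives the passage from the $0$-$E$-unitary setting to the zero-free, $E$-unitary setting, and noting explicitly that the maximal subgroups of the Clifford semigroup $Z$ are the $\mu$-classes $Z_e$, so that the hypotheses of the Clifford lemma line up with the statement of the corollary.
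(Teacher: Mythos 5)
Your proposal is correct and follows essentially the same route as the paper: the paper's one-line proof (``by applying this proposition to $S/\mu$'') rests on exactly the three ingredients you assemble --- the $E$-unitary proposition applied to $S/\mu$, the strongly surjective homomorphism $\varphi$ with $\ker\varphi = \G(Z)$ fed into \cite[Theorem 5.3.14]{ananth-renault}, and the Clifford lemma identifying amenability of $\G(Z)$ with amenability of the maximal subgroups $Z_e$ of $Z$. Your two flagged points of care (that the kernel identification survives in the zero-free $E$-unitary setting, and that the maximal subgroups of $Z$ are the $\mu$-classes rather than the $\mcH$-classes) are precisely the details the paper leaves implicit, and both check out.
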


An inverse semigroup whose semilattice of idempotents is isomorphic to the natural numbers under the reverse of the usual ordering is called an 
{\it inverse $\omega$-semigroup}. The above result allows for a characterization of amenability for the universal groupoid of such a semigroup.

\begin{cor}
	Let $S$ be an inverse $\omega$-semigroup. Then $\G(S)$ is amenable if and only if each maximum subgroup
	\[
		H_e = \{  s \in S : ss^* = s^*s=e \}
	\]
	of $S$ is amenable.
\end{cor}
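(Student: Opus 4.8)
The plan is to reduce the statement to the preceding corollary, which treats inverse semigroups $S$ for which $S/\mu$ is $E$-unitary and asserts that $\G(S)$ is amenable if and only if every $\mu$-class $Z_e$ is amenable and the maximal group image $G_\mu = \sigma(S/\mu)$ is amenable. Thus I must do three things: verify that $S/\mu$ is $E$-unitary so that this corollary applies, show that $G_\mu$ is automatically amenable for an $\omega$-semigroup, and identify the $\mu$-classes $Z_e$ with the maximal subgroups $H_e$. Note first that since $E \cong \mathbb{N}$ with the reversed order has no least element, $S$ has no zero, so the $E$-unitary (rather than $0$-$E$-unitary) hypothesis is the relevant one.

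Next I would pin down the structure of $S/\mu$. Since $\mu$ is idempotent separating, $S/\mu$ is a fundamental inverse semigroup whose semilattice of idempotents is again the $\omega$-chain $E$. By Munn's theorem \cite{MunnFun}, $S/\mu$ is isomorphic to a full inverse subsemigroup of the Munn semigroup $T_E$, and for the $\omega$-chain one computes that $T_E$ is the bicyclic monoid (see \cite{LawsonBook}): each principal ideal $\down e$ is itself an $\omega$-chain, and between any two such ideals there is a unique order isomorphism. Since the bicyclic monoid is $E$-unitary, and a full inverse subsemigroup of an $E$-unitary inverse semigroup is again $E$-unitary, $S/\mu$ is $E$-unitary and the preceding corollary applies. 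Moreover, restricting the canonical homomorphism $\sigma : T_E \to \mathbb{Z}$ to $S/\mu$ and using that $S/\mu$ is full and $E$-unitary, one checks that the induced congruence is exactly the minimal group congruence; hence $\sigma(S/\mu)$ is isomorphic to a subgroup of $\mathbb{Z}$. In particular $G_\mu$ is abelian, so it is always amenable.

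Then I would identify $Z_e$ with $H_e$. The cleanest route is to observe that the maximal subgroups of $S/\mu$ are trivial: in the bicyclic monoid the $\mcH$-class of an idempotent is a singleton, and this passes to full inverse subsemigroups. Applying $\mu$ to $H_e$ yields a short exact sequence $1 \to Z_e \to H_e \to H_{\mu(e)}^{S/\mu} \to 1$ (surjectivity holds because $\mu$ is idempotent separating), and since the quotient is trivial we obtain $H_e = Z_e$ for every $e \in E(S)$. Alternatively this can be seen directly: for $s \in H_e$ the conjugation $f \mapsto sfs^*$ is an order automorphism of the principal ideal $\down e$, which is an $\omega$-chain and hence rigid, so $sfs^* = f$ for all $f \leq e$; a short computation then gives $sfs^* = ef$ for all $f \in E$, i.e. $s \,\mu\, e$, so $s \in Z_e$.

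Finally I would assemble the pieces: by the preceding corollary $\G(S)$ is amenable if and only if every $Z_e$ is amenable and $G_\mu$ is amenable; the latter always holds, and $Z_e = H_e$, so $\G(S)$ is amenable if and only if every $H_e$ is amenable, as claimed. I expect the main obstacle to be purely structural rather than analytic: everything hinges on identifying $T_E$ with the bicyclic monoid for the $\omega$-chain and on the rigidity of $\omega$-chains, from which the $E$-unitarity of $S/\mu$, the amenability of $G_\mu$, and the equality $Z_e = H_e$ all flow. The groupoid amenability content is entirely packaged into the preceding corollary, so no further work with $\G(S)$ itself is needed.
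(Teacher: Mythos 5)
Your proposal is correct and follows essentially the same route as the paper: reduce to the preceding corollary by showing that $H_e = Z_e$ for all $e \in E(S)$ (i.e.\ $S$ is cryptic) and that $S/\mu$ sits as a full inverse subsemigroup of the bicyclic monoid, so that $S/\mu$ is $E$-unitary and $\sigma(S/\mu)$ embeds in $\mathbb{Z}$, hence is amenable. The only difference is one of presentation: the paper cites these two structural facts from Section 5.4 of Lawson's book, whereas you derive them directly (identifying the Munn semigroup $T_E$ of the $\omega$-chain with the bicyclic monoid, and using rigidity of $\omega$-chains to get $Z_e = H_e$), which is a fine, self-contained substitute for the citation.
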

\begin{proof} If $S$ is an inverse $\omega$-semigroup then $S$ is cryptic and $S/\mu$ is a full subsemigroup of the bicyclic monoid (see 
\cite[Section 5.4]{LawsonBook}). Thus $H_e = Z_e$ for all $e$ in $E(S)$ and $\sigma(S/\mu) \subseteq \mathbb{Z}$.
\end{proof}

We would like to obtain a more general ``two out of three'' amenability theorem that doesn't require assumptions about the Munn quotient $S/\mu$. We have to pay for this by imposing more hypotheses on $S$ itself. In particular, we need to assume that the extension $Z \to S \to S/\mu$ is \emph{split}, so there is a transversal $r : S/\mu \to S$. Given $s \in S$, we will use $s_r$ to denote $r(\mu(s))$. 

In the case that we have a split extension, we can appeal to results about semidirect products of groupoids. Let $G$ and $\K$ be groupoids, and suppose there is a map $p : \K \to \goo$ satisfying $p = p \circ r = p \circ d$, and that $G$ acts on the left of $\K$. The \emph{semidirect product} of $G$ and $\K$ is the groupoid
\[
	\K \rtimes G = \bigl\{ (\eta, \gamma) \in \K \times G : p(\eta) = r(\gamma) \bigr\},
\]
with
\[
	(\eta_1, \gamma_1)(\eta_2, \gamma_2) = \bigl( \eta_1 (\gamma_1 \cdot \eta_2) , \gamma_1 \gamma_2 \bigr)
\]
and
\[
	(\eta, \gamma)^{-1} = (\gamma^{-1} \cdot \eta^{-1}, \gamma^{-1})
\]
whenever the product makes sense. The unit space of $\K \rtimes G$ can be identified with $\ko$, and the range and source maps are given by
\[
	d(\eta, \gamma) = \gamma^{-1} \cdot d(\eta), \quad r(\eta, \gamma) = r(\eta).
\]

\begin{exmp}
\label{exmp:semidirect}
	Let $\Gamma$ be a groupoid, $\K \subseteq \Gamma$ a group bundle, and $G \subseteq \Gamma$ a wide subgroupoid that normalizes $\K$. Then 
	$\ko = \goo = \Gamma^{\scriptscriptstyle{(0)}}$, and the structure map $p : \K \to \goo$ is just the bundle map of $\K$. Note that $G$ acts on the left of $\K$ via 
	conjugation:
	\[
		\gamma \cdot \eta = \gamma \eta \gamma^{-1}.
	\]
	Then we have
	\[
		\K \rtimes G = \bigl\{ (\eta, \gamma) \in \K \times G : p(\eta) = r(\gamma) \bigr\},
	\]
	with the groupoid operations given by
	\[
		(\eta_1, \gamma_1)(\eta_2, \gamma_2) = \bigl( \eta_1(\gamma_1 \eta_2 \gamma_1^{-1}), \gamma_1 \gamma_2)
	\]
	and
	\[
		(\eta, \gamma)^{-1} = (\gamma^{-1} \eta^{-1} \gamma, \gamma^{-1}).
	\]
\end{exmp}

The semidirect product that we will consider is essentially that of Example \ref{exmp:semidirect}, with $\Gamma = \G(S)$, $\K = \G(Z)$, and $G = \G(S/\mu)$.

\begin{prop}
	Let $S$ be an inverse semigroup, $Z$ the centralizer of the idempotents, and $S/\mu$ the Munn quotient, and suppose the extension
	\[
		Z \to S \to S/\mu
	\] 
	is split. Then the map $\varphi : \G(Z) \rtimes \G(S/\mu) \to \G(S)$ defined by
	\[
		\varphi \bigl( [z, \theta_{s_r}(F)], [s_r, F] \bigr) = [zs_r, F]
	\]
	is an isomorphism of topological groupoids.
\end{prop}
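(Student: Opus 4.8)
The plan is to verify directly that the proposed map $\varphi$ is a well-defined, continuous groupoid isomorphism by exhibiting an explicit continuous inverse. Since the extension is split with transversal $r : S/\mu \to S$, every $s \in S$ factors as $s = (s s_r^{-1}) s_r$ where $s_r = r(\mu(s))$, and the point is that $z := s s_r^*$ lies in $Z$. So the first step is to establish this factorization at the semigroup level: given $s \in S$, confirm that $\mu(s s_r^*) = \mu(s)\mu(s_r)^* = \mu(s)\mu(s)^*$ is idempotent, hence $ss_r^* \in \Ker\mu = Z$, and that $s = z s_r$ up to restriction by an idempotent. This suggests the natural candidate inverse $\psi : \G(S) \to \G(Z) \rtimes \G(S/\mu)$ given by
\[
	\psi([s, F]) = \bigl( [s s_r^*, \theta_{s_r}(F)], [s_r, F] \bigr).
\]

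Next I would check that $\varphi$ is a well-defined groupoid homomorphism. Well-definedness on germs follows from the observation that if $se = te$ for some $e \in F$, then $\mu(s)e = \mu(t)e$, so $s_r$ and $t_r$ represent germs that agree near $F$; combined with the semilattice computations above, $[zs_r, F]$ is independent of representatives. For the homomorphism property, using the semidirect-product multiplication of Example \ref{exmp:semidirect} with conjugation action, I would unwind $\varphi$ applied to a product $\bigl([z_1, \theta_{(s_1)_r}(F_1)], [(s_1)_r, F_1]\bigr)\bigl([z_2, \theta_{(s_2)_r}(F_2)], [(s_2)_r, F_2]\bigr)$ and match it against $[z_1 (s_1)_r z_2 (s_1)_r^* (s_1)_r (s_2)_r, F_2] = [z_1 (s_1)_r z_2 (s_2)_r, F_2]$; this should agree with the product of the images $[z_1 (s_1)_r, F_1][z_2 (s_2)_r, F_2]$ after checking that $\theta_{(s_2)_r}(F_2) = F_1$ lines up the ranges and sources correctly. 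I would then confirm $\varphi$ and $\psi$ are mutually inverse as set maps, which reduces to the factorization $s = s s_r^* s_r$ (modulo restriction) and the fact that $(s_r)_r = s_r$ since $r$ is a transversal.

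Finally I would address continuity of both $\varphi$ and $\psi$. Continuity of $\varphi$ is the easier direction: basic open sets in $\G(S)$ pull back to products of basic open sets in $\G(Z)$ and $\G(S/\mu)$, and $\varphi$ sends the basic set $\Theta_Z(z, U) \times \Theta_{S/\mu}(s_r, U)$ into $\Theta(zs_r, U)$, so $\varphi$ is an open continuous map. For continuity of $\psi$ one must verify that the assignment $F \mapsto s_r$ and hence $[s,F] \mapsto [s s_r^*, \theta_{s_r}(F)]$ behaves continuously; since $s_r$ depends only on $\mu(s)$ and not on $F$, along a convergent net $[s_i, F_i] \to [s, F]$ we eventually have $s_i e_i = s e_i$ so that $\mu(s_i) e_i = \mu(s) e_i$, forcing the transversal values $(s_i)_r$ to give the same germ as $s_r$ near $F$; the two component maps are then eventually constant in the relevant sense and converge. \textbf{The main obstacle} I anticipate is precisely this last continuity check for $\psi$, because the transversal $r$ is only a set-theoretic (semigroup) section and need not interact nicely with the topology on $\fE$; the key to overcoming it is that $s_r = r(\mu(s))$ is locally constant on germs, so continuity is governed entirely by the already-established continuity of $\varphi$ from Proposition \ref{prop:51} rather than by any topological regularity of $r$ itself.
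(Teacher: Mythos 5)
Your proposal is correct and follows essentially the same route as the paper: both rest on the factorization $s = (s s_r^*) s_r$ with $s s_r^* \in Z$ (which the paper uses for surjectivity) and on the same computation for the homomorphism property, using that elements of $Z$ commute with idempotents. The only cosmetic difference is that you package bijectivity as an explicit inverse $\psi$ and verify its continuity directly, whereas the paper gets the homeomorphism property for free by observing that $\varphi$ is continuous and open because it is simply given by multiplication in $\G(S)$.
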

\begin{proof}
	First note that we can identify $\G(S/\mu)$ with a subgroupoid of $\G(S)$, and that $\G(S/\mu)$ acts on $\G(Z)$ by conjugation (since $S/\mu$ acts on $Z$
	via conjugation). Thus it makes sense to build the semidirect product $\G(Z) \rtimes \G(S/\mu)$. Observe that $\varphi$ is well-defined, continuous, and open, 
	since it is simply given by multiplication in $\G(S)$. Thus it suffices to check that $\varphi$ is an algebraic isomorphism.
	
	Let $([z, \theta_{s_r}(F)],[s_r, F]), ([w, \theta_{t_r}(F')], [t_r, F']) \in \G(Z) \rtimes \G(S/\mu)$. Then observe that
	\begin{align*}
		\varphi \bigl( ([z, \theta_{s_r}(F)],[s_r, F]), ([w, \theta_{t_r}(F')], [t_r, F']) \bigr) &= \varphi \bigl( [zs_rws_r^*, \theta_{s_r}(F)], [s_r t_r, F'] \bigr) \\
			&= [zs_rws_r^*s_rt_r, F'] \\
			&= [zs_rs_r^*s_rwt_r, F'] \\
			&= [z s_r wt_r, F']
	\end{align*}
	since $w \in Z$. On the other hand,
	\begin{align*}
		\varphi \bigl( [z, \theta_{s_r}(F)],[s_r, F] \bigr) \varphi \bigl( [w, \theta_{t_r}(F')], [t_r, F'] \bigr) &= [zs_r, F][w t_r, F'] \\
			&= [zs_r w t_r, F'],
	\end{align*}
	so $\varphi$ is a homomorphism.
	To see that $\varphi$ is surjective, put $z = ss_r^*$. Then $z \in Z$ and $s = zs_r$. Moreover, $s^*s = s_r^*z^*zs_r \in F$, so $z^*z \in \theta_{s_r}(F)$ and 
	$s_r^*s_r \in F$. Thus
	\[
		[s, F] = [z, \theta_{s_r}(F)][s_r, F] = \varphi([z, \theta_{s_r}(F)],[s_r, F])
	\]
	and $\varphi$ is surjective. For injectivity, suppose that 
	\[
		\varphi([z, \theta_{s_r}(F)], [s_r, F]) = \varphi([w, \theta_{t_r}(F')],[t_r, F']),
	\]
	i.e., $[zs_r, F] = [wt_r,F']$. Then $F = F'$, and there is an idempotent $e \in F$ such that $zs_re = wt_r e$. We claim first that $[s_r, F] = [t_r, F]$. We know that
	$se = te$, and $r \circ \mu$ is a homomorphism, so $s_r e = t_r e$. Thus $[s_r, F] = [t_r, F]$, and it follows by cancellation that $[z, \theta_{s_r}(F)] = 
	[w, \theta_{t_r}(F)]$. Thus $\varphi$ is an isomorphism.
\end{proof}

\begin{thm}
\label{thm:splitamen}
	Let $S$ be an inverse semigroup, and suppose $S$ is a split extension of $Z$ by $S/\mu$. Then $\G(S)$ is amenable if and only if $\G(S/\mu)$ is amenable
	and $Z_e$ is amenable for all $e \in E(S)$.
\end{thm}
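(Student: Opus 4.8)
The plan is to transport the problem through the isomorphism $\G(Z) \rtimes \G(S/\mu) \cong \G(S)$ established in the preceding proposition, and then combine the extension theorem for amenability with the earlier analysis of Clifford group bundles. First I would use splitness to identify $\G(S)$ with the semidirect product $\G(Z) \rtimes \G(S/\mu)$ via the isomorphism $\varphi$; since this is an isomorphism of topological groupoids, amenability of $\G(S)$ is equivalent to amenability of the semidirect product. The semidirect product carries a canonical projection onto its second factor, $([z, \theta_{s_r}(F)], [s_r, F]) \mapsto [s_r, F]$, which is a continuous groupoid homomorphism onto $\G(S/\mu)$. Unwinding the identification (using $\mu(s_r) = \mu(s)$), this projection corresponds to the strongly surjective homomorphism $[s, F] \mapsto [\mu(s), F]$ of Proposition \ref{prop:51}.

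The point secured by the splitness hypothesis is that the kernel of this projection is exactly $\G(Z)$: in the semidirect product the kernel consists of those pairs whose second coordinate is a unit, and conjugation by a unit is trivial, so the kernel is precisely a copy of $\G(Z)$. Thus no $0$-$E$-unitary assumption on $S/\mu$ is needed here, in contrast to Theorem \ref{thm:amenable}. With the projection $\G(S) \to \G(S/\mu)$ strongly surjective and with kernel $\G(Z)$, I would invoke \cite[Theorem 5.3.14]{ananth-renault} exactly as in the proof of Theorem \ref{thm:amenable}, concluding that $\G(S)$ is amenable if and only if both the quotient $\G(S/\mu)$ and the kernel $\G(Z)$ are amenable.

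It then remains to translate amenability of $\G(Z)$ into a condition on $S$. Since $Z = Z(E)$ is a Clifford semigroup whose maximal subgroups are precisely the $\mu$-classes $Z_e$, the preceding lemma on Clifford semigroups shows that $\G(Z)$ is amenable if and only if each $Z_e$ is an amenable group. Assembling the two equivalences yields that $\G(S)$ is amenable if and only if $\G(S/\mu)$ is amenable and every $Z_e$ is amenable, which is the assertion.

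I expect the only genuine subtlety to lie in the first two steps: confirming that the semidirect-product projection is strongly surjective and that its kernel is honestly $\G(Z)$ rather than the larger set $\{[s,F] : s \in \,\up Z\}$ that appears when splitness is dropped, and checking that the hypotheses of \cite[Theorem 5.3.14]{ananth-renault} are met by the semidirect-product structure. Once these are verified, the remainder is a direct assembly of results already in hand, so the argument is quite short.
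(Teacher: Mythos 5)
Your proposal is correct and follows essentially the same route as the paper: identify $\G(S)$ with $\G(Z) \rtimes \G(S/\mu)$ via the splitting, use the canonical projection onto $\G(S/\mu)$ (strongly surjective with kernel $\G(Z)$) together with \cite[Theorem 5.3.14]{ananth-renault}, and then apply the Clifford lemma to reduce amenability of $\G(Z)$ to amenability of the groups $Z_e$. The only cosmetic difference is that you verify the projection's strong surjectivity and kernel directly, whereas the paper cites the discussion following Definition 5.3.26 in \cite{ananth-renault} for these facts.
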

\begin{proof}
	By the previous proposition, we have $\G(S) \cong \G(Z) \rtimes \G(S/\mu)$. As described in the discussion following Definition 5.3.26 in \cite{ananth-renault},
	there is a homomorphism $c : \G(S) \cong \G(Z) \rtimes \G(S/\mu) \to \G(S/\mu)$, which is strongly surjective with $\ker c = \G(Z)$. It then follows from
	\cite[Theorem 5.3.14]{ananth-renault} that $\G(S)$ is amenable if and only if $\G(S/\mu)$ and $\G(Z)$ are. As we have already seen, the latter is amenable if and 
	only if every group $Z_e$ is amenable.
\end{proof}

We end with a few comments on conditional expectations. Thanks to the following analog of \cite[Proposition 4.1]{BNRSW}, we can build a conditional expectation 
from $\G(S)$ to $\G(Z)$ whenever the latter is closed. The proof is identical to the one found in \cite{BNRSW}.

\begin{thm}
\label{thm:condexp}
	Let $G$ be a locally compact Hausdorff \'{e}tale groupoid, and let $H \subseteq G$ be a group bundle that is open, wide, and normal. If $H$ is closed, then there 
	is a faithful conditional expectation $\Phi_r : C_r^*(G) \to C_r^*(H)$ characterized by $\Phi_r(f) = f \vert_H$ for all $f \in C_c(G)$. If $H$ has weak containment, 
	then there is a conditional expectation $\Phi : C^*(G) \to C^*(H)$, which is not necessarily faithful.
\end{thm}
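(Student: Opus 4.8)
The plan is to construct the conditional expectation at the level of the dense subalgebra $C_c(G)$ first, show it is contractive for the reduced norm, and then extend by continuity. The map $\Phi_r$ is defined on $C_c(G)$ by restriction: $\Phi_r(f) = f\vert_H$. The first thing to verify is that this restriction genuinely lands in $C_c(H)$. Since $H$ is open in $G$, a function in $C_c(G)$ restricts to a continuous function on $H$; since $H$ is closed in $G$ and $G$ is Hausdorff, the support of the restriction is compact in $H$, so $f\vert_H \in C_c(H)$. (The closedness hypothesis is precisely what guarantees compactness of the restricted support—without it $f\vert_H$ could fail to be compactly supported.) The wideness of $H$ ensures $H^{(0)} = G^{(0)}$, so $\Phi_r$ fixes $C_c(G^{(0)}) \subseteq C_c(H)$ and acts as the identity on $C_c(H)$, which is the defining property of a conditional expectation once boundedness is established.

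Next I would establish that $\Phi_r$ is $C_c(H)$-bimodular and contractive. Bimodularity is a direct computation from the convolution formula: since $H$ is a \emph{subgroupoid}, for $g \in C_c(H)$ the convolutions $g * f$ and $f * g$ restricted to $H$ only involve composable pairs lying in $H$, giving $\Phi_r(g * f) = g * \Phi_r(f)$ and similarly on the right. The crucial analytic input is contractivity with respect to the reduced norms, and this is where I would lean on the regular representations. For each unit $u \in G^{(0)} = H^{(0)}$ there is a regular representation $\pi_u$ of $C_r^*(G)$ on $\ell^2(G_u)$ and a regular representation $\lambda_u$ of $C_r^*(H)$ on $\ell^2(H_u)$. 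Because $H$ is open in $G$, the inclusion $\ell^2(H_u) \hookrightarrow \ell^2(G_u)$ is given by an orthogonal projection $P_u$, and one checks that $P_u \pi_u(f) P_u = \lambda_u(\Phi_r(f))$ for $f \in C_c(G)$—this is a matrix-coefficient computation using that $H_u \subseteq G_u$. Compressing by a projection is norm-nonincreasing, so $\|\Phi_r(f)\|_{C_r^*(H)} = \sup_u \|\lambda_u(\Phi_r(f))\| \leq \sup_u \|\pi_u(f)\| = \|f\|_{C_r^*(G)}$, giving the contractive estimate that lets $\Phi_r$ extend to a norm-one projection $C_r^*(G) \to C_r^*(H)$.

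Faithfulness of $\Phi_r$ then follows from faithfulness of the regular representations together with positivity. For $a \in C_r^*(G)^+$ with $\Phi_r(a^*a) = 0$, one has $P_u \pi_u(a^*a) P_u = 0$ for every $u$, i.e. $\pi_u(a)P_u = 0$; since $H^{(0)} = G^{(0)}$, the ranges of the $P_u$ together with the faithfulness of $\bigoplus_u \pi_u$ force $a = 0$. The normality hypothesis on $H$ is what ensures $C_r^*(H)$ sits inside $C_r^*(G)$ as a genuine subalgebra in a way compatible with the expectation (via Theorem~\ref{thm:inclusion}), so that $\Phi_r$ really is a projection onto that copy. For the full-algebra statement, one defines $\Phi$ on $C_c(G)$ by the same restriction formula; the weak containment hypothesis $C^*(H) = C_r^*(H)$ means the target norm on $C_c(H)$ agrees with the reduced norm, and composing the contractive estimate above with the quotient map $C^*(G) \to C_r^*(G)$—or rather verifying the estimate directly against the universal norm—yields a bounded extension $\Phi : C^*(G) \to C^*(H)$. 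Since the universal norm dominates the reduced norm on $C^*(G)$ but they need not agree, the resulting $\Phi$ need not be faithful, explaining the weaker conclusion in that case.

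I expect the main obstacle to be the contractivity estimate via compression by $P_u$: carefully matching up the regular representation of $C_r^*(G)$ with that of $C_r^*(H)$ and verifying $P_u \pi_u(f) P_u = \lambda_u(\Phi_r(f))$ requires tracking how the convolution product interacts with the decomposition $G_u = H_u \sqcup (G_u \setminus H_u)$, and uses openness of $H$ in an essential way. Everything else—well-definedness into $C_c(H)$, bimodularity, the identity property on $C_c(H)$—is bookkeeping, and indeed since the statement asserts the argument is identical to \cite[Proposition~4.1]{BNRSW}, the real content is merely checking that the group-bundle-with-weak-containment hypotheses here substitute cleanly for the interior-isotropy hypotheses used there.
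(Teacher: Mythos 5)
Your proposal is correct and follows essentially the same route as the paper, which simply defers to the restriction argument of \cite[Proposition 4.1]{BNRSW}: define $\Phi_r$ on $C_c(G)$ by restriction (well-defined into $C_c(H)$ because $H$ is clopen), prove contractivity by compressing the regular representations by the projections $P_u : \ell^2(G_u) \to \ell^2(H_u)$, extend by continuity, and obtain the full-algebra statement from weak containment exactly as you describe. The only step worth tightening is faithfulness: $\pi_u(a)P_u = 0$ is weaker than $\pi_u(a) = 0$, so rather than citing faithfulness of $\bigoplus_u \pi_u$ you should note that $\delta_u \in \operatorname{ran} P_u$ gives $\bigl\langle \pi_u(a^*a)\delta_u, \delta_u \bigr\rangle = 0$ for every $u$ and then invoke faithfulness of the canonical expectation of $C_r^*(G)$ onto $C_0(G^{(0)})$ (equivalently, injectivity of the map $j : C_r^*(G) \to C_0(G)$).
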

\begin{proof}
	The proof of the first assertion is identical to the one found in \cite{BNRSW}. The second assertion is also proven in the same way as in \cite{BNRSW}, once
	one realizes that it not necessary to assume that $H$ is amenable. The crucial hypothesis is that $C^*(H) = C_r^*(H)$, i.e., that $H$ has weak containment.
\end{proof}

We know that $\G(Z)$ is always an open, wide, normal subgroupoid of $\G(S)$. If $S$ is fundamental, then $\G(Z) = \G(S)^{\scriptscriptstyle{(0)}}$ and hence it is closed. 
It is well known that there is a conditional expectation $\Phi_r : C_r^*(\G) \to C_r^*(\G)^{\scriptscriptstyle{(0)}}$. The results of this section give us three other situations 
where $\G(Z)$ is guaranteed to be closed.

\begin{thm}
	Let $S$ be a Hausdorff inverse semigroup satisfying any of the following conditions:
	\begin{enumerate}
		\item $S$ is fundamental.
		\item $S/\mu$ $E$-unitary. 
		\item $S/\mu$ is $0$-$E$-unitary.
		\item The extension $Z \to S \to S/\mu$ is split.
	\end{enumerate}
	Then there is a faithful conditional expectation $\Phi_r : C_r^*(\G(S)) \to C_r^*(\G(Z))$, which is characterized
	on $C_c(\G(S))$ by restriction of functions:
	\[
		\Phi_r(f) = f \vert_{\G(Z)}.
	\]
	If $\G(Z)$ has weak containment, then there is a (not necessarily faithful) conditional expectation $\Phi : C^*(\G(S)) \to C^*(\G(Z))$.
\end{thm}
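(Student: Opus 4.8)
The plan is to reduce the whole statement to Theorem~\ref{thm:condexp}. That result already manufactures both conditional expectations---the faithful $\Phi_r$ and the possibly non-faithful $\Phi$ under weak containment---the moment we know that $H = \G(Z)$ is an open, wide, normal group bundle sitting inside the Hausdorff \'{e}tale groupoid $G = \G(S)$ and that $H$ is \emph{closed}. Three of these four structural properties are already on record: that $\G(Z)$ is an open group bundle is Proposition~\ref{prop:Z} (resting on Theorem~\ref{thm:clifford}), wideness is automatic since $\G(Z)$ and $\G(S)$ share the unit space $\fE$, and normality is the computation in Corollary~\ref{cor:ZintoS}; moreover $\G(S)$ is Hausdorff because $S$ is. Consequently the entire burden of the theorem is the single assertion that $\G(Z)$ is \emph{closed} in $\G(S)$ under each of the four hypotheses, after which I would simply invoke Theorem~\ref{thm:condexp} with $H = \G(Z)$.

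Cases (1)--(3) I would dispatch quickly. If $S$ is fundamental then every $\mu$-class is trivial, so $Z = E(S)$ and $\G(Z) = \G(S)^{\scriptscriptstyle{(0)}}$; the unit space of a Hausdorff \'{e}tale groupoid is clopen, hence closed. For (2) and (3) I would use the continuous homomorphism $\varphi : \G(S) \to \G(S/\mu)$ of Proposition~\ref{prop:51}. The computation recorded just after that proposition shows that $\varphi([s,F])$ is a unit precisely when $\mu(s)$ dominates some idempotent of $F$; when $S/\mu$ is $E$-unitary (case (2)) or $0$-$E$-unitary (case (3)) this forces $\mu(s)$ itself to be idempotent, so $s \in Z$ and therefore $\ker\varphi = \G(Z)$. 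Since $E$-unitary and $0$-$E$-unitary inverse semigroups are automatically Hausdorff, $\G(S/\mu)$ is Hausdorff, whence its unit space is closed and $\G(Z) = \varphi^{-1}\bigl(\G(S/\mu)^{\scriptscriptstyle{(0)}}\bigr)$ is closed in $\G(S)$ by continuity of $\varphi$.

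Case (4) is where I expect the real work, precisely because $\ker\varphi$ may be strictly larger than $\G(Z)$, so $\varphi$ no longer pins down $\G(Z)$. Here I would instead use the isomorphism $\G(S) \cong \G(Z) \rtimes \G(S/\mu)$ established in the proposition preceding Theorem~\ref{thm:splitamen}, together with the projection cocycle $c$ onto the second factor, which satisfies $\ker c = \G(Z)$ exactly (as noted in the proof of Theorem~\ref{thm:splitamen}). To conclude that $\G(Z) = \ker c$ is closed I once again need the unit space of the target $\G(S/\mu)$ to be closed, i.e.\ $\G(S/\mu)$ to be Hausdorff, and this is the step I anticipate being the main obstacle, since Hausdorffness of $\G(S/\mu)$ is not supplied by the hypotheses directly. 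The way through is to exploit the splitting itself: the transversal $r : S/\mu \to S$ is an injective homomorphism, and because $r$ carries the idempotents of $S/\mu$ bijectively onto $E(S)$ its image is a \emph{full} inverse subsemigroup of $S$. Arguing exactly as in Proposition~\ref{prop:Z}, $r$ then induces a topological embedding $\G(S/\mu) \cong \G(r(S/\mu)) \hookrightarrow \G(S)$, so that $\G(S/\mu)$ is a subspace of the Hausdorff groupoid $\G(S)$ and hence Hausdorff. Its unit space is therefore closed, and $\G(Z) = \ker c$ is closed, as required.

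With $\G(Z)$ shown to be a closed, open, wide, normal group bundle in the Hausdorff \'{e}tale groupoid $\G(S)$ in all four cases, the conclusion follows at once from Theorem~\ref{thm:condexp}: it yields the faithful conditional expectation $\Phi_r : C_r^*(\G(S)) \to C_r^*(\G(Z))$ characterized by $\Phi_r(f) = f\vert_{\G(Z)}$ for $f \in C_c(\G(S))$, and its final clause supplies the (possibly non-faithful) expectation $\Phi : C^*(\G(S)) \to C^*(\G(Z))$ whenever $\G(Z)$ has weak containment.
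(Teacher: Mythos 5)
Your proposal is correct and follows essentially the same route as the paper: in each case the point is to show that $\G(Z)$ is closed---via $\G(Z) = \G(S)^{\scriptscriptstyle{(0)}}$ in case (1), via $\G(Z) = \ker\varphi$ with $\G(S/\mu)$ Hausdorff in cases (2)--(3), and via the semidirect-product projection $c$ in case (4)---and then invoke Theorem \ref{thm:condexp}, whose hypotheses (open, wide, normal group bundle) are already on record. The only place you go beyond the paper is case (4), where the paper simply asserts that Hausdorffness of $\G(S)$ passes to $\G(S/\mu)$; your embedding $\G(S/\mu) \cong \G(r(S/\mu)) \hookrightarrow \G(S)$ via the transversal (using that $r(S/\mu)$ is a full inverse subsemigroup, so the argument of Proposition \ref{prop:Z} applies verbatim) is a correct justification of that unproved claim.
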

\begin{proof}
	We have already discussed the first case. In the second and third cases, we know that $\G(Z)$ is the kernel of a continuous cocycle. Moreover, $\G(S/\mu)^{(0)}$ 
	is closed since $\G(S/\mu)$ is Hausdorff, so $\G(Z)$ must be closed. If the extension $Z \to S \to S/\mu$ is split, then the proof of Theorem \ref{thm:splitamen} 
	shows that $\G(Z)$ is again the kernel of a homomorphism $c: \G(S) \to \G(S/\mu)$. If $\G(S)$ is Hausdorff, then so is $\G(S/\mu)$, which implies that
	$\ker c = \G(Z)$ is closed.
\end{proof}

Finally, we include an example of an inverse semigroup $S$ for which $\iso(\G(S))^{\circ}$ fails to be closed in the universal groupoid of $S$, 
but the groupoid of $Z(E)$ is closed. Let $E = \{x_i,L_i,R_i : i > 0\} \cup \{0,1\}$ be the semilattice pictured below.

\begin{center}\begin{tikzpicture}
	\node (1) at (2,5) {$1$};
	\node[draw=none] (eL) at (2,4.25) {};
	\node[draw=none] (eR) at (3,4.25) {};
	\node[draw=none] (ell1) at (2,4) {$\vdots$};
	\node[draw=none] (L3) at (1,3.75) {$L_3$};
	\node[draw=none] (R3) at (3,3.75) {$R_3$};
	\node (x2) at (2,3) {$x_2$};
	\node (L2) at (1,2.25) {$L_2$};
	\node (R2) at (3,2.25) {$R_2$};
	\node (x1) at (2,1.5) {$x_1$};
	\node (L1) at (1,.75) {$L_1$};
	\node (R1) at (3,.75) {$R_1$};
	\node (0) at (2,0) {$0$};
	\draw (0)--(L1)--(x1)--(L2)--(x2)--(R2)--(x1)--(R1)--(0) (x2)--(L3) (x2)--(R3) (eL)--(1) ;
	
\end{tikzpicture}
\end{center}

Let $S$ be the inverse subsemigroup of $T_E$ generated by $E$ and the automorphism $\alpha$ of $\down 1 = E$ that interchanges $L_i$ and $R_i$ for all $i > 0$ 
and leaves all other idempotents fixed. Then $S$ is a fundamental inverse semigroup, since it is a full subsemigroup of $T_E$. Thus $\G(Z)$ is principal (and equal to
$\G(S)^{\scriptscriptstyle{(0)}}$, in fact), hence closed in the universal groupoid of $S$. Note that $\Theta(\alpha, N^{x_i}_{L_{i},R_{i}}) = \{[\alpha, \up x_i]\} \subseteq 
\iso(\G(S))^{\circ}$.

Since $\up 1$ is the singleton $\{1\}$, the basic open sets containing $[\alpha, \up 1]$ are all of the form $\Theta(\alpha, N^{1}_{e_1, e_2, \dots, e_n})$, where $e_j < 1$ for $1 \leq j \leq n$. There exists $N$ such that $e_j < x_N$ for $1 \leq j \leq n$. So for $i > N$, $[\alpha, \up x_i] \in \Theta(\alpha,N^{1}_{e_1, e_2, \dots, e_n})$. Thus $[\alpha, \up x_i] \longrightarrow [\alpha, \up 1]$. Similarly $[\alpha, \up L_{i+1}] \in \Theta(\alpha,N^{1}_{e_1, e_2, \dots, e_n})$ for all $i>N$ and hence $[\alpha, \up L_{i+1}] \longrightarrow [\alpha, \up 1]$ . Since $[\alpha, \up L_{i+1}] \not\in \iso(\G(S))$, we see that 
$[\alpha, \up 1] \not\in \iso(\G(S))^{\circ}$ and hence $\iso(\G(S))^{\circ}$ is not closed.

\bibliographystyle{amsplain}
\bibliography{SemigroupBib.bib}
\end{document}